\theoremstyle{plain}
\newtheorem{thm}{Theorem}[section]
\newtheorem{lem}[thm]{Lemma}
\newtheorem{prop}[thm]{Proposition}
\newtheorem{cor}[thm]{Corollary}
\theoremstyle{definition}
\newtheorem{rem}[thm]{Remark}
\newtheorem*{rem*}{Remark}
\newcommand{\Ai}{{\rm Ai}}
\newcommand{\Bi}{{\rm Bi}}
\newcommand{\sgn}{{\rm sgn}}
\newcommand{\ex}{\mathbb{E}}
\newcommand{\pr}{P}
\newcommand{\cF}{\mathcal{F}}
\newcommand{\cN}{\mathcal{N}}
\providecommand{\pro}[1]{(#1_t)_{t \geq 0}}
\providecommand{\seq}[1]{(#1_n)_{n\in \mathbb{N}}}
\DeclareMathOperator{\Spec}{Spec}
\DeclareMathOperator{\Dom}{Dom}
\newcommand{\R}{\mathbb{R}}
\newcommand{\E}{\mathbb{E}}
\renewcommand{\leq}{\leqslant}
\renewcommand{\geq}{\geqslant}
\renewcommand{\leq}{\leqslant}
\renewcommand{\geq}{\geqslant}
\def\({\left(}
\def\){\right)}
\def\[{\left[}
\def\]{\right]}
\def\<{\langle}
\def\>{\rangle}
\date{}
\title{\Large \sc{Spectral Properties of the Massless Relativistic \\ Harmonic Oscillator}
\footnotetext{2010 MS Classification: Primary 47G30; Secondary 60G52, 35P05. The work of the second author was supported in part by the Polish Ministry of Science and Higher Education grant no. N N201 373136.}
%\footnotetext{2000 MS Classification:
%    Primary 60J65; Secondary 60J60.
%    {\it Key words and phrases}: Bessel potentials, Riesz kernels,
%    relativistic process, stable process, Poisson kernel, Green function,
%    interval.  Research  supported by KBN
%    grant 1 P03A 020 28 }
\author{
\small J\'ozsef L{\"{o}}rinczi\\[0.1cm]
{\it \small School of Mathematics, Loughborough University} \\[-0.7ex]
{\it \small Loughborough LE11 3TU, United Kingdom} \\[-0.7ex]
{\small {\tt  J.Lorinczi@lboro.ac.uk}}\\[0.5cm]
\small Jacek Ma{\l}ecki\\[0.1cm]
{\it \small Institute of Mathematics and Computer Science} \\[-0.7ex]
{\it \small  Wroc{\l}aw University of Technology} \\[-0.7ex]
{\it \small Wyb. Wyspia{\'n}skiego 27, 50-370 Wroc{\l}aw, Poland} \\[-0.7ex]
{\small  {\tt jacek.malecki@pwr.wroc.pl}} }
}
\begin{document}
\maketitle

\vspace{3cm}
\begin{abstract}
\noindent
The spectral properties of the pseudo-differential operator $(-d^2/dx^2)^{1/2}+x^2$ are analyzed by a combination
of functional integration methods and direct analysis. We obtain a representation of its eigenvalues and
eigenfunctions, prove precise asymptotic formulae, and establish various analytic properties. We also derive trace
asymptotics and heat kernel estimates.
\end{abstract}

\newpage
\section{Introduction}
Stochastic methods based on functional integration applied to the study of properties of pseudo-differential
operators and related semigroups offer a powerful alternative to the techniques of analysis \cite{B,J}. Typical
problems addressed include spectral properties of the operator, heat kernel estimates, $L^p$-boundedness,
ultracontractivity properties, and the decay of the eigenfunctions.

In the paper \cite{KL} we study analytic properties of evolution semigroups generated by fractional Schr\"odinger
operators
$$
H_\alpha = (-\Delta)^{\alpha/2} + V, \quad 0 < \alpha < 2
$$
with (fractional) Kato-class potentials $V$. As $\alpha \neq 2$ these operators generate non-Gaussian
$\alpha$-stable processes running under the potential $V$. These are L\'evy processes with paths having jump
discontinuities. The well-known case $\alpha=2$ corresponds to standard Schr\"odinger operators generating
Brownian motion in the presence of $V$ (which is an It\^o diffusion under extra conditions on the potential).
The properties of Schr\"odinger operators and fractional Schr\"odinger operators in many aspects markedly differ.
One sharp contrast appears in the decay properties of their ground state (first eigenfunction). Specifically,
the ground state of a Schr\"odinger operator with pinning potential $V(x) \to \infty$ as $|x|\to\infty$ decays
(super)exponentially while it decays only polynomially in the case of fractional Schr\"odinger operators. This
difference is due to the heavy tails of stable processes, as opposed to the Gaussian tail of Brownian motion.
Another remarkable difference is that, roughly, the Schr\"odinger semigroups $e^{-t(-\Delta+V)}$ are intrinsically
ultracontractive for super-quadratically increasing potentials, while this property holds for fractional
Schr\"odinger semigroups $e^{-t\left((-\Delta)^{\alpha/2}+V\right)}$ already for potentials increasing faster
than logarithmically.

Our aim in the present paper is to focus on one single pseudo-differential operator and derive fine details on
its spectrum and eigenfunctions by using a combination of functional integration and hands-on analytic methods.
We will consider the operator
$$
H = \sqrt{-\frac{d^2}{dx^2}}+x^2
$$
and obtain various formulae and estimates on its kernel, eigenfunctions, and spectrum. The interest in this
particular choice is twofold. One is that in order to further develop the more general theory it is important
to have cases of reference with as detailed information as possible. The results we obtain below are indeed
more refined than the general methods using either pseudo-differential calculus or functional integration
provide, and we view this paper as complementary to the more general results in \cite{HIL,LHB}. A second
motivation is that there is much controversy in the physics literature (see, for instance, \cite{L,DX}) about
claimed solutions of fractional Schr\"odinger equations. Due to non-locality of these operators such equations
are more delicate than usual Schr\"odinger equations and an appropriate rigorous mathematical treatment is
necessary. The operator we consider describes the massless (semi-) relativistic quantum harmonic oscillator
studied in physics.

Our main results are as follows. First we derive a functional integral representation which allows to define
$H$ as a self-adjoint operator. From the results of \cite{KK} it follows that the first eigenfunction (ground
state) $\varphi_1$ of $H$ is bounded both from below and above by $x^{-4}$ with suitable prefactors, moreover,
for all other eigenfunctions $|\varphi_n(x)|\leq \mbox{const}\,\varphi_1(x)$ holds. Due to the special choice
of the potential, by using special functions we improve this result to a detailed asymptotic expansion of
eigenfunctions, in particular, tighten the order of magnitude on the bounds
(Theorem \ref{Eigenfunctions:asymptotics} below). Secondly, in \cite{KKMS} it was proven that the eigenfunctions
are uniformly bounded for the case of the Cauchy process run in an interval only. We prove uniform boundedness of
all eigenfunctions for $H$ on $\R$ (Theorem \ref{UB:thm}), and show that the set of zeroes of each eigenfunction
is finite (Corollary \ref{zeroes}). Also, we discuss the shape of the ground state (Theorem \ref{shapegs}).
On the eigenvalues our main results are the precise asymptotic expansions in Corollary \ref{Eigenvalues:asymp}
resulting from Theorem \ref{eigenval}, in which we prove that the eigenvalues are simple. We also obtain a
spectral gap estimate (Corollary \ref{specgap}), and derive the trace asymptotics in Theorem \ref{trestim}. We
give a heat kernel estimate in Theorem \ref{heatkernel:estimates:1}.

The plan of the paper is the following. In Section 2 we derive a Feynman-Kac-type formula for a class of operators
covering our case and then particularize to our chosen operator. We use the functional representation to define
the operator as a self-adjoint operator. In Section 3 we show that the Fourier transform of the eigenfunctions
satisfy the Airy equation under
appropriate boundary conditions. This allows us to identify the spectrum of the fractional harmonic oscillator
operator and derive some asymptotic formulae. Furthermore, here we present the main results as discussed above.
In a short Section 4 we provide an appendix of the used facts on Airy functions.

\section{Functional integral representation}
Recall that the linear operator with domain $H^\alpha(\R^d)= \{ f \in L^2(\R^d): \, |k|^{\alpha}\hat f\in L^2(\R^d)\}$,
$0 < \alpha < 2$, $d \geq 1$, defined by $\widehat{(-\Delta)^{\alpha/2} f}(k) = |k|^\alpha \hat f(k)$, is the
\emph{fractional Laplacian} of order $\alpha$. It is essentially self-adjoint on $C_0^\infty(\R^d)$, and its spectrum
is $\Spec ((-\Delta)^{\alpha/2}) =\Spec_{\rm{ess}}((-\Delta)^{\alpha/2})=[0,\infty)$.

Let $V: \R^d \to \R$ be a bounded Borel measurable function. We call $(-\Delta)^{\alpha/2} + V$, $0 < \alpha < 2$,
a \emph{fractional Schr\"odinger operator} with potential $V$, where $V$ acts as a multiplication operator. Since
$V$ is a bounded function, the operator $(-\Delta)^{\alpha/2} + V$ is self-adjoint on $\Dom((-\Delta)^{\alpha/2})$
defined as a sum of two self-adjoint operators. Therefore $\Spec ((-\Delta)^{\alpha/2} + V) \subset [0,\infty)$.

Let $(\Omega_X, {\cF}_X, \pr_{X})$ be a probability space and $\pro X$ a real valued symmetric
$\alpha$-stable process on it, with $0 < \alpha < 2$. $\pro X$ is a non-Gaussian L\'evy process, in
particular it has independent and stationary increments. We use the notations $\pr^x$ and $\ex^x$,
respectively, for the distribution and the expected value of the process starting in $x \in \R$ at
time $t=0$; for simplicity we do not indicate the measure in subscript (while we do when have any
other measure or process). The characteristic function of $\pro X$ is given by
\begin{equation}
\ex^0[e^{i \xi X_t}] = e^{-t|\xi|^\alpha}, \quad \xi \in \R, \, t > 0.
\label{charstable}
\end{equation}
As a L\'evy process, $\pro X$ has a version with paths in $D([0,\infty); \R^d)$, the space of c\`adl\`ag
functions (i.e., right continuous functions with existing left limits).

Recall that a subordinator $\pro S$ on a given probability space $(\Omega_{S}, \cF_{S}, \pr_{S})$ is
an almost surely non-decreasing $[0,\infty)$-valued L\'evy process starting at $0$. An example is the
$(\alpha/2)$-stable subordinator $\pro S$ uniquely determined by its Laplace transform
\begin{equation}
\label{df:Ltr}
\ex_{\pr_S}^0[e^{-\lambda S_t}] = e^{-t\lambda^{\alpha/2}}, \quad t\geq 0, \, \lambda \geq 0.
\end{equation}

Consider standard Brownian motion $\pro B$ on a given probability space $(\Omega_W, \cF_W, \pr_{W})$,
where $\pr_W$ is Wiener measure. Clearly,
\begin{equation}
\ex^0_{\pr_W}[e^{i \xi B_t}] = e^{-t|\xi|^2}, \quad \xi \in \R, \, t > 0.
\label{charBM}
\end{equation}
It is a standard fact that any symmetric $\alpha$-stable process $\pro X$
can be obtained as a random time change of Brownian motion where this random time process is an
$(\alpha/2)$-stable subordinator $\pro S$. It is convenient to consider the processes $\pro B$ and
$\pro S$ on two different probability spaces $(\Omega_W, \cF_W, \pr_{W})$ and $(\Omega_{S}, \cF_{S},
\pr_{S})$. Then the process $\pro X$ can be obtained in terms of subordinate Brownian motion with
respect to the $(\alpha/2)$-stable subordinator:
$$
X_t : \Omega_{\pr_{W}} \times \Omega_{\pr_{S}} \ni (\omega, \tau) \longmapsto B_{S_t(\tau)}(\omega)
:= X_t(\omega,\tau).
$$
This can also be seen by the composition of the characteristic exponent (\ref{charBM}) with the
Laplace exponent (\ref{df:Ltr}) which gives (\ref{charstable}). Furthermore, $\pr$ can then be
identified as the image measure of this process on $D([0,\infty); \R^d)$ such that
$$
\pr^x(X_t \in A) = (\pr^x_{W} \times \pr^0_{S}) (B_{S_t} \in A)
$$
holds for all Borel sets $A \subset \R^d$.

In what follows we take the case $d=1$, $\alpha=1$, i.e., the operators $\sqrt{-\frac{d^2}{dx^2}}$,
$V(x) = x^2$ so that we will consider the fractional Schr\"odinger operator
\begin{equation}
H := \left(-\frac{d^2}{dx^2}\right)^{1/2} + x^2.
\label{H}
\end{equation}
%with core $C^\infty_0(\R)$.
The symmetric 1-stable process $\pro X$ is also known as \emph{Cauchy process} whose one-dimensional
distributions are given explicitly by
\begin{equation*}
P^x(X_t\in dy) = \frac{1}{\pi} \frac{t}{t^2+(x-y)^2}dy\/,\quad x\in\R\/.
\end{equation*}
Our main concern in this paper is to study the spectral properties of $H$ by using functional integration
methods.

First we show how $H$ relates with the Cauchy process. We have the following Feynman-Kac-type formula, which
we state in $d$ dimensions and a class of $V$ containing our special case (see \cite{HIL,KL} for more general
pseudo-differential operators).
\begin{thm}
Let $V \in L^\infty(\R^d)$ and $\pro X$ be a $d$-dimensional Cauchy process. We have
\begin{equation}
\label{eq:FKFfrSch}
(f, e^{-t\left(\sqrt{-\Delta}+V\right)}g)= \int_{\R^d} dx
\ex^x \left[\overline{f(X_0)}g(X_t) e^{-\int_0^t V(X_s)ds}\right].
\end{equation}
\label{FK}
\end{thm}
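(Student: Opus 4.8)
The plan is to prove \pref{eq:FKFfrSch} first in the free case $V\equiv 0$, where it reduces to the statement that the Cauchy transition semigroup coincides with the semigroup $e^{-t\sqrt{-\Delta}}$ defined by functional calculus, and then to transport the identity to an arbitrary bounded $V$ by a Trotter product (equivalently, Dyson--Phillips) argument resting on the Markov property of the process $\pro X$.

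\emph{The free case.} Since $X_0=x$ holds $\pr^x$-almost surely, the right-hand side of \pref{eq:FKFfrSch} with $V\equiv 0$ equals $\int_{\R^d}\overline{f(x)}\,\ex^x[g(X_t)]\,dx=(f,p_t * g)$, where $p_t$ is the (symmetric) transition density of $\pro X$, so that $\ex^x[g(X_t)]=(p_t * g)(x)$. Taking Fourier transforms and using \pref{charstable} with $\alpha=1$ gives $\widehat{p_t}(\xi)=\ex^0[e^{i\xi\cdot X_t}]=e^{-t|\xi|}$, hence $\widehat{p_t * g}(\xi)=e^{-t|\xi|}\hat g(\xi)$; on the other hand $\widehat{e^{-t\sqrt{-\Delta}}g}(\xi)=e^{-t|\xi|}\hat g(\xi)$ by the spectral definition of $(-\Delta)^{1/2}$. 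By Plancherel the two sides of \pref{eq:FKFfrSch} agree for all $f,g\in L^2(\R^d)$. (For general bounded $V$ the right-hand side still defines a bounded form in $f,g$, since its modulus is at most $e^{t\|V\|_\infty}|f(x)|(p_t * |g|)(x)$, which is integrable by Young's inequality.)

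\emph{The bounded potential.} As $V\in L^\infty(\R^d)$ acts as a bounded self-adjoint multiplication operator and $\sqrt{-\Delta}\geq 0$ is self-adjoint, the Trotter--Kato product formula shows that $\bigl(e^{-\frac{t}{n}\sqrt{-\Delta}}e^{-\frac{t}{n}V}\bigr)^{n}$ converges strongly to $e^{-t(\sqrt{-\Delta}+V)}$ as $n\to\infty$. Iterating the free identity $n$ times and concatenating the successive Cauchy transitions at the mesh points $jt/n$ via the finite-dimensional distributions of $\pro X$ (its independent, stationary increments), one gets
\begin{equation*}
\Bigl(f,\bigl(e^{-\frac{t}{n}\sqrt{-\Delta}}e^{-\frac{t}{n}V}\bigr)^{n}g\Bigr)=\int_{\R^d}dx\,\ex^x\Bigl[\overline{f(X_0)}\,g(X_t)\,\exp\Bigl(-\frac{t}{n}\sum_{j=1}^{n}V(X_{jt/n})\Bigr)\Bigr].
\end{equation*}
Letting $n\to\infty$, the left-hand side tends to $(f,e^{-t(\sqrt{-\Delta}+V)}g)$, while on the right the exponential is bounded by $e^{t\|V\|_\infty}$ uniformly in $n$ and $\omega$, and the Riemann sums $\frac{t}{n}\sum_{j=1}^{n}V(X_{jt/n})$ converge to $\int_0^t V(X_s)\,ds$ in $L^2(\pr^x)$. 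Dominated convergence then yields \pref{eq:FKFfrSch}.

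\emph{Main obstacle.} The delicate point is exactly this last convergence of the Riemann sums: for merely Borel measurable (rather than continuous) $V$ the path $s\mapsto V(X_s)$ need not be Riemann integrable, so one cannot argue pathwise and must instead estimate the second moment $\ex^x\bigl[\bigl(\frac{t}{n}\sum_{j}V(X_{jt/n})-\int_0^t V(X_s)\,ds\bigr)^{2}\bigr]$, which tends to $0$ using only $\|V\|_\infty<\infty$ and the stochastic continuity of the Cauchy process. An equivalent route that sidesteps this issue is to expand $e^{-t(\sqrt{-\Delta}+V)}$ in its Dyson--Phillips series (norm-convergent since $V$ is bounded) and $e^{-\int_0^t V(X_s)ds}$ in the matching series of iterated integrals $\int_{0\leq s_1\leq\cdots\leq s_k\leq t}V(X_{s_1})\cdots V(X_{s_k})\,ds_1\cdots ds_k$, and to compare them term by term, each term reducing to the free identity by the Markov property; the interchange of summation and integration is justified because the $k$-th term is bounded by $\|V\|_\infty^{k}t^{k}/k!$. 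Beyond this bookkeeping the argument is routine.
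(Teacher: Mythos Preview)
Your proposal is correct and follows the same Trotter-product skeleton as the paper, but differs in two places worth noting. For the free case you invoke Plancherel and the characteristic function $\widehat{p_t}(\xi)=e^{-t|\xi|}$ directly, whereas the paper goes through the subordination representation $X_t=B_{S_t}$ and writes $(f,e^{-t\sqrt{-\Delta}}g)=\ex^0_{\pr_S}[(f,e^{S_t\Delta}g)]$ via the spectral theorem; your route is shorter for the Cauchy case, while the paper's transfers verbatim to any Bernstein function of $-\Delta$. For the passage to general $V\in L^\infty$, the paper does \emph{not} attempt your $L^2(\pr^x)$ estimate on the Riemann sums: instead it first restricts to $V\in C_b(\R^d)$, where $s\mapsto V(X_s(\omega))$ is c\`adl\`ag and hence genuinely Riemann integrable pathwise, and then reaches arbitrary $V\in L^\infty$ by mollification $V_n\to V$ a.e.\ together with strong convergence $e^{-t(\sqrt{-\Delta}+V_n)}\to e^{-t(\sqrt{-\Delta}+V)}$. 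That two-stage argument is more elementary than your direct $L^2$ estimate---note, incidentally, that stochastic continuity of $\pro X$ alone is not enough, since $V(X_{s+h})\to V(X_s)$ in probability can fail for discontinuous $V$; what actually drives the $L^2$ convergence is that $X_s$ has a density, so that $\int p_h(y,z)(V(z)-V(y))^2\,dz\to 0$ at Lebesgue points of $V$. Your Dyson--Phillips alternative is a genuinely independent route that sidesteps Riemann sums entirely and is arguably the cleanest of the three, at the cost of writing out the term-by-term matching via the multi-time identity (which is the paper's Step~2).
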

\begin{proof}
We divide the proof into four steps.

\medskip
\noindent
\emph{(Step 1)} Suppose $V \equiv 0$. Our first claim is
\begin{align}
\label{eq:st1}
(f,e^{-t\left(\sqrt{-\Delta}\right)}g)=\int_{\R^d} dx \ex^x \left[\overline{f(X_0)}g(X_t)\right].
\end{align}
We regard the process $\pro X$ as the composition of Brownian motion $\pro B$ and the 1/2-stable
subordinator $\pro S$ as explained above. Let $E_\lambda$ denote the spectral projection of the
self-adjoint operator $-\Delta \geq 0$. Then by using \eqref{df:Ltr} and the usual Feynman-Kac
formula for $e^{t\Delta}$ we have
\begin{align*}
\left(f, e^{-t\sqrt{-\Delta}}g \right)
& = \int_0^\infty e^{-t\sqrt\lambda} d(f,E_\lambda g)
= \int_0^\infty \ex^0_{\pr_S}\left[e^{-\lambda S_t}\right]  d(f,E_\lambda g) \\
& = \ex^0_{\pr_S}\left[\int_0^\infty e^{-S_t\lambda} d(f,E_\lambda g) \right]
= \ex^0_{\pr_S}\left[(f,e^{-S_t (-\Delta)} g) \right] \\
& =  \ex^{0}_{\pr_S}\left[\int_{\R^d} dx\ex_{\pr_W}^x[\overline{f(B_0)} g(B_{S_t})]\right]
= \int_{\R^d} dx \ex^x[\overline{f(X_0)} g(X_t)],
\end{align*}
thus \eqref{eq:st1} follows.

\medskip
\noindent
\emph{(Step 2)}
Let $0 = t_0 < t_1 < ...< t_n$, $f_0, f_n \in L^2(\R^d)$ and assume that $f_j \in L^\infty(\R^d)$,
for $j = 1,2,...,n-1$. We claim that
\begin{align}
\label{eq:st2}
\left(f_0, \prod_{j=1}^n e^{-(t_j - t_{j-1}) \sqrt{-\Delta}}f_j\right) =
\int_{\R^d} dx \ex^x \left[\overline{f(X_0)}\prod_{j=1}^n f_j(X_{t_j}) \right].
\end{align}
For simplifying the notation put $s_j = t_j-t_{j-1}$, for any $j=1,...,n$ and
$$
g_i = f_i\left(\prod_{j=i+1}^n e^{-s_j \sqrt{-\Delta}} f_j\right),
\quad j=1,..., n-1, \, g_n = f_n.
$$
Notice that $g_j = f_j e^{-s_{j+1} \sqrt{-\Delta}} g_{j+1}$. By \eqref{eq:st1} the left hand
side of \eqref{eq:st2} can be represented as
$$
\int_{\R^d} dx \ex^x \left[\overline{f(X_0)}g_1(X_{s_1}) \right]= \int_{\R^d} dx
\overline{f(x)}\ex^x \left[g_1(X_{s_1}) \right].
$$
Using \eqref{eq:st1} again, we obtain
\begin{align*}
\ex^x \left[g_j(X_{s_j}) \right] & = \int_{\R^d} p(s_j,y-x) g_j(y) dy
 = \int_{\R^d} p(s_j,y-x) f_j(y) e^{-s_{j+1}\sqrt{-\Delta}}g_{j+1}(y) dy\\
& = \int_{\R^d} \ex^y \left[p(s_j,X_0-x) f_j(X_0) g_{j+1}(X_{s_{j+1}})\right] dy \\
& = \int_{\R^d} p(s_j,y-x) f_j(y) \ex^y \left[g_{j+1}(X_{s_{j+1}})\right] dy
 = \ex^x \left[f_j(X_{s_j}) \ex^{X_{s_j}} \left[g_{j+1}(X_{s_{j+1}})\right]\right],
\end{align*}
for $j=1,...,n-1$. The above equalities yield
\begin{align*}
& \left(f_0, \prod_{j=1}^n e^{-s_j \sqrt{-\Delta}}f_j\right) =
\int_{\R^d} dx  \ex^x\left[\overline{f(X_0)}f_1(X_{s_1}) \times \right.\\
& \qquad \qquad \left. \times \ex^{X_{s_1}}\right.\left[f_2(X_{s_2})\ex^{X_{s_2}}\right.
\left[f_3(X_{s_3})\ex^{X_{s_3}}\right. \left.\left.\left.\left[\ldots \ex^{X_{s_{n-1}}}
\left[f_n(X_{s_n})\right] \ldots \right]\right] \right]\right],
\end{align*}
and \eqref{eq:st2} follows by the Markov property of $\pro X$.

\medskip
\noindent
\emph{(Step 3)} Let now $0 \neq V \in C_b(\R^d)$. We show \eqref{eq:FKFfrSch} for such $V$.
Since $\sqrt{-\Delta}$ is self-adjoint, the Trotter product formula holds:
$$
\left(f,e^{-t(\sqrt{-\Delta} + V)}g\right) =
\lim_{n \to \infty} \left(f,\left(e^{-(t/n)\sqrt{-\Delta}} e^{-(t/n)V}\right)^n g\right).
$$
Combined with (Step 2) it yields
$$
\left(f,e^{-t(\sqrt{-\Delta} + V)}g\right) = \lim_{n \to \infty} \int_{\R^d} dx
\ex^x \left[\overline{f(X_0)} g(X_t) e^{-\sum_{j=1}^n  (t/n)V(X_{tj/n})}\right].
$$
Since each c\`adl\`ag path $s \mapsto \omega(s) = X_s(\omega)$ is continuous in $s \in [0,t]$ except
for at most finite points, we have $\sum_{j=1}^n  (t/n)V(X_{tj/n}) \to \int_0^t V(X_s) ds$ as $n \to
\infty$ in the sense of Riemann integral. Thus \eqref{eq:FKFfrSch} follows for $V \in C_b(\R^d)$.

\medskip
\noindent
\emph{(Step 4)} We make use of the argument in \cite[Theorem 6.2]{S2} to complete the proof.
Suppose that $V \in L^\infty(\R^d)$ and let $V_n = \phi(x/n)(V \ast h_n)$, where $h_n = n^d \phi(nx)$
with $\phi \in C^\infty_0(\R^d)$ such that $0 \leq \phi \leq 1$, $\int \phi(x) dx = 1$ and $\phi(0)
= 1$. Then $V_n \to V$ almost everywhere and $V_n$ are bounded and continuous. Let $\cN$ denote the
set of all $x$ such that $V_n(x)$ does not converge to $V(x)$. Then the measure of $\left\{t \in [0,\infty):
X_t(\omega) \in \cN\right\}$ is zero $\pr^x$-almost surely and $\int_0^t V_n(X_s) ds \to \int_0^t V(X_s)ds$
as $n \to \infty$ $\pr^x$-a.s. Thus
$$
\int_{\R^d} dx \ex^x \left[\overline{f(X_0)} g(X_t) e^{-\int_0^t V_n(X_s) ds }\right] \to \int_{\R^d} dx
\ex^x \left[\overline{f(X_0)} g(X_t) e^{-\int_0^t V(X_s) ds }\right]
$$
as $n \to \infty$. On the other hand, $e^{-t(\sqrt{-\Delta} + V_n)} \to e^{-t(\sqrt{-\Delta}+V)}$ in
strong sense as $n \to \infty$, since $\sqrt{-\Delta} + V_n \to \sqrt{-\Delta} + V$ on the domain
$\Dom(\sqrt{-\Delta})$.
\end{proof}

\iffalse
\begin{prop}
Let $V \geq 0$ such that $V \in L^1_{\rm loc}(\R^d)$. Then equality (\ref{eq:FKFfrSch}) in Theorem \ref{FK}
holds for the operator sum $\sqrt{-\Delta}\, \, \dot+\, \,  V$ in form sense.
\end{prop}
\begin{proof}
Define
\begin{eqnarray*}
V_n(x) =
\left\{
\begin{array}{ll}
V(x),&V(x)<n,\\
n,& V(x) \geq n.
\end{array} \right.
\end{eqnarray*}
Also, define the closed quadratic forms
\begin{eqnarray*}
&& q_n(f,f) = \left((-\Delta)^{1/4} f, (-\Delta)^{1/4} f\right) + \left(V_n^{1/2}f,V_n^{1/2}f\right) \\
&& q_{\infty}(f,f) = \left((-\Delta)^{1/4} f, (-\Delta)^{1/4}f\right)+\left(V^{1/2}f,V^{1/2}f\right).
\end{eqnarray*}
The corresponding form domains are $Q(q_n) = Q((-\Delta)^{1/4})$ and $Q(q_\infty) = Q((-\Delta)^{1/4})\cap Q(V)$.
We have $q_n \leq q_{n+1} \leq ... \rightarrow q_\infty$ in quadratic form sense on $\{f\in\cap_n Q(q_n) \,|\,
\sup_n q_n(f,f)<\infty \} = Q((-\Delta)^{1/4})\cap Q(V)$. By the monotone convergence theorem for non-decreasing
sequences of forms (see \cite[Th. 3.1 and 4.1]{sim78} it follows that
$
e^{-t \left((-\Delta)^{1/4} \,\, \dot{+}\,\, V_n\right)} \rightarrow
e^{-t \left((-\Delta)^{1/4} \, \,\dot{+}\,\, V\right)},
$
for all $t\geq0$, in strong sense as $n\rightarrow\infty$. On the other hand,
$$
\int dx \ex^x \left[e^{-\int_0^t V_n(X_s) ds} \overline{f(X_0)}g(X_t) \right]
\longrightarrow \int dx \ex^x \left[e^{-\int_0^t V(X_s ds)} \overline{f(X_0)}g(X_t) \right]
$$
as $n\rightarrow\infty$, by the above and dominated convergence.
\end{proof}

\fi

We can use Theorem \ref{FK} to define $\sqrt{-\Delta}+V$ as a self-adjoint operator. We define the
\emph{Feynman-Kac semigroup}
%$\{T_t: t \geq 0\}$ for the one-dimensional Cauchy process $\pro X$ and potential $V(x)=x^2$ by
\begin{equation*}
(T_tf)(x) = \ex^x\[e^{-\int_0^t V(X_s)ds}f(X_t)\]\/, \quad f\in L^2(\R^d)\/, \; x\in\R^d.
\end{equation*}

\begin{thm}
\label{prop:FKS}
Let $V \in L^1_{\rm loc}(\R^d)$ such that $V\geq 0$. Then $\{T_t: t\geq 0\}$ is a strongly continuous symmetric
semigroup. In particular, there exists a self-adjoint operator $K$ bounded from below such that $e^{-tK}
= T_t$.
\end{thm}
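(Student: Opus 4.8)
The plan is to verify the two defining properties of a strongly continuous symmetric semigroup for $\{T_t : t \ge 0\}$ and then invoke the standard correspondence with self-adjoint generators. First I would establish the \emph{semigroup property} $T_{t+s} = T_t T_s$. This follows from the Markov property of the Cauchy process $\pro X$ together with additivity of the integral in the exponent: conditioning on $\cF_s$ and using $\int_0^{t+s}V(X_r)dr = \int_0^s V(X_r)dr + \int_0^t V(X_{s+r}) dr$, one shifts the remaining exponential and the terminal factor $f(X_{t+s})$ by the Markov property to produce $\ex^x[e^{-\int_0^s V(X_r)dr}(T_t f)(X_s)]$. Here one must be a little careful that $V \ge 0$ is only in $L^1_{\rm loc}$, so $e^{-\int_0^t V(X_s)ds} \le 1$ and $T_t$ maps $L^\infty$ into itself and is a contraction on $L^2$ (by Jensen and the fact that the free Cauchy semigroup is a contraction), which is what is needed to make all manipulations meaningful. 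Symmetry, i.e.\ $(f, T_t g) = (T_t f, g)$, follows from the time-reversal symmetry of the Cauchy process under its symmetrizing (Lebesgue) measure: the transition densities $p(t,x-y)$ are symmetric in $x,y$, and the additive functional $\int_0^t V(X_s)ds$ has the same law under the bridge measure from $x$ to $y$ as its reversal from $y$ to $x$; this is most cleanly seen by discretizing the time integral as in (Step 3) of Theorem~\ref{FK}, where each finite-dimensional expression is manifestly symmetric, and passing to the limit.

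Next I would prove \emph{strong continuity}, i.e.\ $T_t f \to f$ in $L^2$ as $t \downarrow 0$ for $f \in L^2$. Writing $T_t f - f = (T_t f - P_t f) + (P_t f - f)$, where $P_t$ is the free Cauchy semigroup, the second term tends to $0$ by strong continuity of $P_t$. For the first term, $(T_t f - P_t f)(x) = \ex^x[(e^{-\int_0^t V(X_s)ds}-1)f(X_t)]$; since $0 \le 1 - e^{-\int_0^t V(X_s)ds} \le 1$ and, $\pr^x$-a.s., $\int_0^t V(X_s)ds \to 0$ as $t \downarrow 0$ (because $s \mapsto V(X_s)$ is $\pr^x$-integrable on $[0,1]$, $\pr^x$-a.s., a consequence of $V \in L^1_{\rm loc}$ and the fact that the occupation measure of the Cauchy process has a density), dominated convergence gives pointwise convergence, and a further dominated-convergence argument in $x$ (dominating by $P_t|f| \to |f|$ in $L^2$, plus uniform integrability) upgrades this to $L^2$. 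Actually it is cleaner to argue: $\|T_t f - P_t f\|_2^2 \le \|\,|T_t f - P_t f|\,\|$ bounded via $\ex^x[(1-e^{-\int_0^t V(X_s)ds})^2|f(X_t)|^2] \le \ex^x[(1-e^{-\int_0^t V}) |f(X_t)|^2]$ and then integrate in $x$, reducing to showing $\int dx\, \ex^x[(1 - e^{-\int_0^t V(X_s)ds})|f(X_t)|^2] \to 0$, which follows by monotone/dominated convergence once the a.s.\ convergence of the functional is in hand.

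Finally, once $\{T_t\}$ is known to be a strongly continuous, symmetric (hence self-adjoint) contraction semigroup on $L^2(\R^d)$, the Hille--Yosida/Stone theorem for self-adjoint semigroups yields a unique self-adjoint operator $K$, bounded below (in fact $K \ge 0$, since $\|T_t\| \le 1$ forces $\Spec(K) \subset [0,\infty)$), with $T_t = e^{-tK}$. The main obstacle is the a.s.\ finiteness and continuity at $t=0$ of the additive functional $A_t = \int_0^t V(X_s)ds$ for merely locally integrable $V \ge 0$: this requires knowing that the Cauchy process does not spend ``too much time'' near any point, which is guaranteed because its potential (occupation) kernel is the Riesz kernel of order one, locally integrable in dimension $d=1$ (and $V$ being $L^1_{\rm loc}$ then makes $\ex^x[A_t] < \infty$ locally in $x$). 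Once this Kato-type property is secured, the semigroup and continuity arguments are routine; symmetry is the most mechanical part, inherited from the finite-dimensional distributions as in the proof of Theorem~\ref{FK}.
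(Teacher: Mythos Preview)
Your outline is sound, but the paper takes a rather different route by exploiting the subordination representation $X_t = B_{S_t}$ throughout instead of working directly with the Cauchy process. For symmetry, rather than discretizing and invoking bridge reversibility, the paper introduces the reversed Brownian motion $\widetilde B_s = B_{S_t - s} - B_{S_t}$, uses $\widetilde B \stackrel{\rm d}{=} B$ together with $S_t - S_r \stackrel{\rm d}{=} S_{t-r}$, and a translation $x \mapsto x - \widetilde B_{S_t}$ to transform $(f, T_t g)$ into $(T_t f, g)$ by hand. For continuity, the paper observes that for a \emph{symmetric} contraction semigroup weak continuity already implies strong continuity, and then checks only weak continuity on $f,g \in C_0^\infty$ by dominated convergence, using that $S_t(\tau)\to 0$ pathwise and that the Feynman--Kac weight is bounded by $1$. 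This completely bypasses the split $T_t-P_t$ and the Kato/occupation-density machinery you invoke. Your route is more general in spirit (it does not need the subordination structure), while the paper's is shorter precisely because subordination is available.

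One caution on your strong-continuity step: the claim that the occupation measure of the Cauchy process has a density is specific to $d=1$ (local times exist there); for $d\ge 2$ the Cauchy path is a Lebesgue null set and the pathwise occupation measure is singular, so that argument does not go through as written. Even in $d=1$, for $V$ growing at infinity (e.g.\ $V(x)=x^2$, the very case of interest here) the Cauchy law has no second moment, so $\ex^x[A_t]=\infty$ and your ``$\ex^x[A_t]<\infty$ locally in $x$'' justification fails. The pathwise conclusion $A_t\to 0$ is nonetheless true in $d=1$: paths are bounded on $[0,t]$, local times $L_t^y$ are continuous with compact $y$-support, and $A_t=\int V(y)L_t^y\,dy \le \|L_t\|_\infty\int_K V \to 0$. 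The paper's weak-continuity shortcut via $S_t\to 0$ avoids this discussion entirely.
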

\noindent
$K$ can be identified as the self-adjoint operator $\sqrt{-\Delta} + V$ for $0 \leq V \in L^1_{\rm loc}(\R^d)$.
\begin{proof}
Since $0 \leq V \in L^1_{\rm loc}(\R^d)$ there exist $C^{(0)}_V$, $C^{(1)}_V > 0$ such that
\begin{align}
\label{eq:integrab}
\sup_{x \in \R^d} \ex^x[e^{-\int_0^t V(X_s)ds}] \leq e^{C^{(0)}_V + C^{(1)}_V t}.
\end{align}
Therefore we have
\begin{eqnarray*}
\|T_t f\|^2
&\leq &
\int_{\R^d}dx \ex^x \left[e^{-2\int_0^t V(X_s)ds}|f(X_t)|^2\right] \\
&\leq&
C_t \int_{\R^d}dx \ex^x \left[|f(X_t)|^2 \right] \\
&=&
C_t \|e^{(t/2)\Delta}f\|^2 \leq C_t\|f\|^2,
\end{eqnarray*}
with some $C_t > 0$. Thus $T_t $ is a bounded operator from $L^2(\R^d)$ to $L^2(\R^d)$. Similarly as in
Step~2 of the proof of Theorem \ref{FK} it is seen that the semigroup property $T_t T_s=T_{t+s}$ holds
for  $t,s\geq 0$.

To obtain strong continuity of $T_t $ in $t$ it suffices to show weak continuity. Let $f,g \in
C_0^\infty(\R^d)$. Then we have
$$
(f,T_t g)=\int_{\R^d}dx \ex_{\pr_W\times\pr_S}^{x,0}
\left[ \overline{f(B_0)} g(B_{S_t}) e^{-\int_0^t V(B_{S_r}) dr} \right].
$$
Since $S_t(\tau)\rightarrow 0$ as $t\rightarrow 0$ for each $\tau\in \Omega_{\pr_S}$, dominated convergence
gives $(f,T_t g)\rightarrow (f,g)$.

Finally, we check the symmetry property $T_t^\ast =T_t$. Let $\widetilde B_s=\widetilde B_s(\omega,\tau)=
B_{S_t(\tau)-s}(\omega)-B_{S_t(\tau)}(\omega)$. For every $\tau\in \Omega_{\pr_S}$ we have then $\widetilde
B_s\stackrel {\rm d}{=} B_s$ with respect to $\pr_W^x$ ($\stackrel {\rm d}{=}$ denotes that the random variables
are identically distributed). Hence
\begin{eqnarray*}
(f, T_t g)
&=&
\int_{\R^d}dx \overline{f(x)} \ex_{\pr_W\times\pr_S}^{x,0} \left[e^{-\int_0^t V(\widetilde B_{S_r})dr}
g(\widetilde B_{S_t}) \right]\\
&=&
\ex_{\pr_W\times\pr_S}^{0,0} \left[\int_{\R^d}dx \overline{f(x)} e^{-\int_0^t V(x+\widetilde B_{S_r})dr}
g(x+\widetilde B_{S_t}) \right]\\
&=&
\ex_{\pr_W\times\pr_S}^{0,0} \left[\int_{\R^d}dx \overline{f(x-\widetilde B_{S_t})}
e^{-\int_0^t V(x+\widetilde B_{S_r}-\widetilde B_{S_t})dr} g(x) \right].
\end{eqnarray*}
In the second equality we changed the variable $x$ to $x-\widetilde B_{S_t}$. Since $\widetilde B_{S_t}
\stackrel{\rm d}{=}-B_{S_t}$ and $\widetilde B_{S_r}-\widetilde B_{S_t}\stackrel{\rm d}{=}B_{S_t-S_r}$,
we have
\begin{eqnarray*}
(f, T_t g)= \int_{\R^d}dx \ex_{\pr_W\times\pr_S}^{x,0} \left[\overline{f(B_{S_t})}
e^{-\int_0^t V(B_{S_t-S_r})dr} g(x) \right].
\end{eqnarray*}
Moreover, as $S_t-S_r\stackrel{\rm d}{=}S_{t-r}$ for $0\leq r\leq t$, we obtain
\begin{eqnarray*}
(f, T_t g)
&=&
\int_{\R^d}dx \ex_{\pr_W\times\pr_S}^{x,0} \left[\overline{f(B_{S_t})}
e^{-\int_0^t V(B_{S_{t-r}})dr }g(x) \right]\\
&=&
\int_{\R^d}dx \overline{\ex_{\pr_W\times\pr_S}^{x,0} \left[f(B_{S_t})e^{-\int_0^t V(B_{S_r})dr }\right]} g(x)
= (T_t f, g).
\end{eqnarray*}
The existence of a self-adjoint operator $K$ bounded from below such that $T_t = e^{-tK}$ follows now by the
Hille-Yoshida theorem. This completes the proof.
\end{proof}

\section{Eigenvalues and eigenfunctions of $H$}
\subsection{Basic regularity properties}
From now on we consider $H$ defined by (\ref{H}) and the related Feynman-Kac semigroup. $\{T_t: t \geq 0\}$ is
given by an integral kernel, i.e., there exists $u(t,x,y)$ such that
\begin{equation*}
(T_tf)(x) = \int_{\R} u(t,x,y)f(y)dy\/,\quad x\in\R\/, f\in L^2(\R)\/.
\end{equation*}
\begin{lem}
\label{compact}
For every $t>0$, the operators $T_t$ are compact.
\end{lem}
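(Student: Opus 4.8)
The plan is to show that $T_t$ is a Hilbert--Schmidt operator; since Hilbert--Schmidt operators are compact, this gives the claim. Throughout, $u(t,x,y)$ denotes the Feynman--Kac kernel, which is nonnegative, symmetric in $x,y$, dominated by the free Cauchy transition density, $0\leq u(t,x,y)\leq p(t,x,y)=\tfrac1\pi\tfrac{t}{t^2+(x-y)^2}\leq\tfrac1{\pi t}$ (because $V(x)=x^2\geq 0$), satisfies the Chapman--Kolmogorov identity coming from the semigroup property established in Theorem~\ref{prop:FKS}, and has $\int_\R u(t,x,y)\,dy=\ex^x\big[e^{-\int_0^t X_s^2\,ds}\big]=:\rho_t(x)$ (by monotone convergence applied to $T_t\ind_{[-R,R]}$ as $R\to\infty$). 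The whole argument reduces to a decay estimate for $\rho_t$ together with a three-step splitting that distributes this decay over both variables.

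\emph{Step 1: $\rho_t(x)\leq C_t/(1+|x|)$.} I would fix $|x|\geq 2$, let $\sigma$ be the first time the Cauchy path started at $x$ enters $\{|z|\leq |x|/2\}$, and split on $\{\sigma>t\}$ versus $\{\sigma\leq t\}$. On $\{\sigma>t\}$ one has $X_s^2\geq x^2/4$ for all $s\in[0,t]$, hence $\int_0^t X_s^2\,ds\geq tx^2/4$, contributing at most $e^{-tx^2/4}$. On $\{\sigma\leq t\}$ one simply bounds $e^{-\int_0^t X_s^2\,ds}\leq 1$ and is left with $\pr^x(\sigma\leq t)\leq\pr^x\big(\sup_{s\leq t}|X_s-x|\geq|x|/2\big)=\pr^0\big(\sup_{s\leq t}|X_s|\geq|x|/2\big)$ by spatial homogeneity. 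Using the explicit tail $\pr^0(|X_s|\geq a)=1-\tfrac2\pi\arctan(a/s)\leq\tfrac{2s}{\pi a}$, self-similarity of the Cauchy process, and a standard maximal inequality for processes with independent increments (Etemadi/Ottaviani type), one gets $\pr^0\big(\sup_{s\leq t}|X_s|\geq a\big)\leq C\,t/a$ for all $a>0$. Combining the two cases, $\rho_t(x)\leq e^{-tx^2/4}+2Ct/|x|$, which together with $\rho_t\leq 1$ gives $\rho_t(x)\leq C_t/(1+|x|)$.

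\emph{Step 2: factorisation.} From $T_t=T_{t/3}\circ T_{t/3}\circ T_{t/3}$ and Tonelli (all kernels are nonnegative),
\[
u(t,x,y)=\int_\R\int_\R u(t/3,x,z)\,u(t/3,z,w)\,u(t/3,w,y)\,dz\,dw .
\]
I would bound the middle factor by $u(t/3,z,w)\leq p(t/3,z,w)\leq 3/(\pi t)$ and then carry out the $z$- and $w$-integrations using $\int_\R u(t/3,x,z)\,dz=\rho_{t/3}(x)$ and, by symmetry, $\int_\R u(t/3,w,y)\,dw=\rho_{t/3}(y)$, to obtain
\[
u(t,x,y)\leq\frac{3}{\pi t}\,\rho_{t/3}(x)\,\rho_{t/3}(y)\leq\frac{C_t'}{(1+|x|)(1+|y|)} .
\]
Then $\int_\R\int_\R u(t,x,y)^2\,dx\,dy\leq (C_t')^2\big(\int_\R(1+|x|)^{-2}dx\big)^2<\infty$, so $T_t$ is Hilbert--Schmidt and hence compact.

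The main obstacle is Step~1, and within it the maximal estimate $\pr^0(\sup_{s\leq t}|X_s|\geq a)\lesssim t/a$: one must pass from the pointwise Cauchy tail to a uniform bound on the running supremum. This is exactly where the heavy tails of the Cauchy process enter, and it is why one only obtains $\rho_t(x)=O(1/|x|)$ rather than exponential decay; fortunately $1/|x|$ is (just) enough for square-integrability once the decay has been spread over both variables via the three-step splitting. Two-step splitting would not suffice, since the resulting bound decays in only one variable. (Alternatively one could argue that $H$ has compact resolvent via a Rellich--Kolmogorov criterion --- tightness in space from $V\to\infty$, tightness in frequency from the $(-\Delta)^{1/2}$ term --- but the Hilbert--Schmidt route fits the functional-integral framework more directly.)
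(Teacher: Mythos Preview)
Your argument is correct. The paper itself does not prove this lemma at all: its ``proof'' is the one-line citation \cite{KK}, Lemma~1. What you have written is therefore a genuine self-contained substitute rather than a reconstruction of anything in the text.

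A brief comparison with what the paper does elsewhere may still be useful. In the proof of Lemma~\ref{L1} the authors show, via the first exit time from a half-line and Darling's explicit density for that exit time, that $\rho_1(x)=\ex^x\big[e^{-\int_0^1 X_s^2\,ds}\big]\leq c\,e^{-(|x|-1)^2}$ for $|x|>2$. This is far stronger than your $O(1/|x|)$ bound and would feed directly into your Step~2 (with a two-fold rather than three-fold splitting, since any sub-Gaussian decay in one variable combined with the free Cauchy bound in the other already yields a Hilbert--Schmidt kernel). Your route via the maximal inequality $\pr^0\big(\sup_{s\leq t}|X_s|\geq a\big)\lesssim t/a$ is more elementary and avoids the Darling formula, at the price of landing exactly at the borderline decay; the three-step factorisation is then the right device to make both variables decay. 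Either way, the Hilbert--Schmidt conclusion is the natural mechanism here, and your write-up makes that mechanism explicit where the paper simply outsources it.
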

\begin{proof}
\cite{KK}, Lemma 1.
\end{proof}

\begin{lem}
Let $u(t,x,y)$ be the integral kernel of the Feynman-Kac semigroup $\{T_t: t \geq 0\}$. The following properties
hold:
\begin{enumerate}
\item
for every $t>0$ the function $u(t,\cdot,\cdot)$ is continuous, strictly positive, and bounded on $\R\times \R$
\item
the semigroup is intrinsically ultracontractive, i.e., there exists $C(t)>0$ such that $u(x,y,t) \leq C(t)\varphi_1(x)
\varphi_1(y)$, for all $t >0$ and $x,y \in \R$, where $\varphi_1$ is the first eigenfunction of
$\sqrt{-\frac{d^2}{dx^2}}+x^2$.
%\item
%$e^{-t(\sqrt{-\Delta}+V)}$ is a bounded operator from $L^p(\R)$ to $L^q(\R)$, for all $1\leq p\leq q\leq\infty$.
\end{enumerate}
\label{uprop}
\end{lem}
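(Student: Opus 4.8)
The plan is to read off part~(1) from the Feynman-Kac representation of Theorem~\ref{FK} together with elementary properties of the Cauchy transition density, and to reduce part~(2) to the dedicated analysis of \cite{KK}. Write $p(t,x,y)=\tfrac1\pi \tfrac{t}{t^2+(x-y)^2}$ for the transition density of the free Cauchy process. By Theorem~\ref{FK} one has $u(t,x,y)\,dy=\ex^x[e^{-\int_0^t X_s^2\,ds};X_t\in dy]$, and since $V(x)=x^2\geq 0$ this gives $0\leq u(t,x,y)\leq p(t,x,y)\leq \tfrac1{\pi t}$, hence boundedness. For strict positivity, note that every c\`adl\`ag path is bounded on $[0,t]$, so $\int_0^t X_s^2\,ds<\infty$ and $e^{-\int_0^t X_s^2\,ds}$ is strictly positive $\pr^x$-a.s.; since $\pr^x(X_t\in A)=\int_A p(t,x,y)\,dy>0$ for every Borel set $A$ of positive Lebesgue measure, it follows that $\int_A u(t,x,y)\,dy=\ex^x[e^{-\int_0^t X_s^2\,ds};X_t\in A]>0$, i.e.\ $u(t,x,\cdot)>0$ a.e.; combined with the continuity proved below, this yields $u(t,x,y)>0$ for all $x,y$.

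For joint continuity of $u(t,\cdot,\cdot)$ I would use the semigroup identity $u(t,x,y)=\int_\R u(t/2,x,z)\,u(t/2,z,y)\,dz$. From $u(t/2,x,\cdot)\leq p(t/2,x,\cdot)\leq \tfrac2{\pi t}$ and $\int_\R u(t/2,x,z)\,dz=\ex^x[e^{-\int_0^{t/2}X_s^2\,ds}]\leq 1$ one gets $u(t/2,x,\cdot)\in L^2(\R)$ with a norm bounded uniformly in $x$; using the symmetry $u(t/2,z,y)=u(t/2,y,z)$ and Cauchy--Schwarz, joint continuity of $u(t,\cdot,\cdot)$ then reduces to the $L^2$-continuity of $x\mapsto u(t/2,x,\cdot)$. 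The latter follows by dominated convergence --- with a local domination by the jointly continuous kernel $p$ --- once one knows the pointwise continuity of $x\mapsto u(s,x,z)$; this last fact can be extracted from the Duhamel/Khasminskii expansion $u=\sum_n(-1)^n u_n$, whose terms are finite and locally equicontinuous because $V$ has only polynomial growth and $p$ decays like $|z|^{-2}$, and it is in any case contained in \cite{KK}.

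For part~(2), intrinsic ultracontractivity, the essential input is \cite{KK}. As recalled in the Introduction, a pinning potential growing faster than $\log|x|$ --- which $V(x)=x^2$ certainly does --- puts the Cauchy Feynman-Kac semigroup in the regime of intrinsic ultracontractivity; \cite{KK} carries this out for $H$, establishes the two-sided ground-state bound $\varphi_1(x)\asymp (1+x^2)^{-2}$ quoted throughout the paper, and in particular yields the estimate $u(t,x,y)\leq C(t)\varphi_1(x)\varphi_1(y)$ with an explicit dependence of $C(t)$ on $t$. I would simply record the required statement from there.

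The main obstacle is part~(2): intrinsic ultracontractivity forces one to match the $|x|^{-4}$ decay of the ground state against the off-diagonal behaviour of $u(t,x,y)$ for a non-local generator whose underlying process has heavy-tailed jumps, and this delicate two-sided heat-kernel analysis is exactly what \cite{KK} supplies, so reproving it here would be disproportionate. Within part~(1) the only point needing genuine care is the spatial continuity of the kernel, handled by the locally dominated perturbation series indicated above (or, again, by \cite{KK}).
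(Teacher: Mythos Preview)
The paper's own proof consists of a single citation to \cite{KK}, so your strategy of ultimately deferring to \cite{KK} --- certainly for intrinsic ultracontractivity, and as a fallback for continuity --- is exactly what the paper does; your added direct arguments for boundedness and strict positivity in part~(1) are correct and a welcome elaboration beyond what the paper writes.

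One caveat on your continuity sketch: the Duhamel/Khasminskii expansion is not well suited to this particular pair $(p,V)$. The Cauchy transition density decays only like $|z|^{-2}$ while $V(z)=z^2$, so already $\int_\R p(s,x,z)\,V(z)\,dz=\infty$ --- equivalently, $x^2$ is not in the Kato class for the Cauchy generator --- and the iterated series $\sum_n(-1)^n u_n$ built from $u_0=p$ does not make sense term by term in the way you indicate. Your hedge of invoking \cite{KK} for this point is therefore the right call; if you wanted a self-contained argument you would instead truncate $V_N=V\wedge N$, use the (now convergent) expansion or the bounded-potential case of Theorem~\ref{FK} to get continuity of the kernel $u_N$, and pass to the limit via monotone convergence and the uniform domination $u\leq u_N\leq p$.
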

\begin{proof}
\cite{KK}.
% and \cite{HIL}.
\end{proof}

%The generator of the above-given semigroup is the Schr\"odinger operator
%\begin{equation*}
%H = -(-\Delta)^{1/2}-V,
%\end{equation*}
%where $\Delta = \dfrac{d^2}{dx^2}$ denotes the classical Laplacian on $\R$. From now we will assume that $V(x)=x^2$.

By Lemma \ref{compact} above the spectrum of $H$ is purely discrete and there exists an orthonormal basis
in $L^2(\R)$ consisting of eigenfunctions $\varphi_n$ such that $T_t\varphi_n = e^{-\lambda_n t}\varphi_n$,
 where $0<\lambda_1<\lambda_2\leq \lambda_3\leq \ldots \to \infty$ are the eigenvalues. Using the relation
 between the semigroup $\{T_t: t \geq 0\}$ and its generator $H$ we get in strong sense
\begin{equation*}
H\varphi_n(x) = \lim_{t\downarrow 0} \frac{T_t\varphi_n(x)-\varphi_n(x)}{t} =
\lim_{t\downarrow 0} \frac{e^{-\lambda_n t}-1}{t}\varphi_n(x) = -\lambda_n\varphi_n(x)\/,\quad x\in\R\/,
\end{equation*}
which means that the functions $\seq\varphi$ are also eigenfunctions of the Schr\"odinger operator $H$ and
\begin{equation}
\label{Eigenproblem}
\sqrt{-\frac{d^2}{dx^2}}\varphi_n(x)+x^2\varphi_n(x) = \lambda_n\varphi_n(x)\/,\quad x\in  \R\/.
\end{equation}

We conclude this section by discussing some basic regularity properties of the eigenfunctions of $H$.
\begin{lem}
For every $n=1,2,\ldots$ we have $\varphi_n\in L^1(\R)$.
\label{L1}
\end{lem}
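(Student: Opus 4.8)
The plan is to combine the intrinsic ultracontractivity bound from Lemma \ref{uprop}(2) with decay information on the ground state $\varphi_1$, reducing the problem to showing $\varphi_1 \in L^1(\R)$. For $n \geq 2$ we have the pointwise bound $|\varphi_n(x)| \leq C \varphi_1(x)$ (this is mentioned in the introduction as a consequence of \cite{KK}, and follows directly from intrinsic ultracontractivity: writing $\varphi_n = e^{\lambda_n}T_1\varphi_n$ and using $u(1,x,y)\leq C(1)\varphi_1(x)\varphi_1(y)$ together with $\|\varphi_n\|_{L^2}=1$ gives $|\varphi_n(x)| \leq e^{\lambda_n}C(1)\varphi_1(x)\|\varphi_1\|_{L^2}$). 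Hence it suffices to treat $n=1$, and in fact to show $\varphi_1 \in L^1(\R)$.

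For the ground state, the strategy is to produce an integrable pointwise upper bound. First I would record that $\varphi_1 \in L^2(\R) \cap L^\infty(\R)$: boundedness again follows from $\varphi_1 = e^{\lambda_1}T_1\varphi_1$ and the boundedness of $u(1,\cdot,\cdot)$ from Lemma \ref{uprop}(1), or alternatively from the intrinsic ultracontractivity estimate evaluated along the diagonal. So the only issue is the behaviour of $\varphi_1(x)$ as $|x|\to\infty$, where $L^2$-membership alone gives too little. Here I would invoke the results of \cite{KK} cited in the introduction, namely that $\varphi_1(x)$ is bounded above and below by a constant times $x^{-4}$ for large $|x|$; since $x \mapsto (1+x^2)^{-2}$ is integrable on $\R$, this yields $\varphi_1 \in L^1(\R)$ immediately. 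If one wishes to avoid quoting the sharp $x^{-4}$ asymptotics at this point in the paper, an alternative is to use the Feynman--Kac representation directly: from $\varphi_1(x) = e^{\lambda_1}\ex^x\big[e^{-\int_0^1 X_s^2\,ds}\varphi_1(X_1)\big]$ and $\|\varphi_1\|_\infty < \infty$, one estimates $\varphi_1(x) \leq e^{\lambda_1}\|\varphi_1\|_\infty\, \ex^x\big[e^{-\int_0^1 X_s^2\,ds}\big]$, and then bounds the latter expectation by splitting on the event that the Cauchy path stays in a ball of radius $|x|/2$ around its start for the full unit time interval (on which the potential integral is at least of order $x^2$, giving a Gaussian-type factor) versus the complementary event (whose probability is $O(|x|^{-1})$ by the heavy-tailed one-dimensional marginal $P^x(X_1 \in dy) = \frac{1}{\pi}\frac{dy}{1+(x-y)^2}$, but actually one needs a uniform-in-time exit estimate). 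Either route gives a bound that decays at least like $|x|^{-1-\varepsilon}$, which suffices.

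The main obstacle is the tail decay: $L^2$-membership of the eigenfunctions does not by itself give $L^1$-membership on the unbounded domain $\R$, so genuinely new decay information is needed. The cleanest resolution is to lean on the cited $x^{-4}$ two-sided bounds for $\varphi_1$ from \cite{KK} and the dominance $|\varphi_n| \leq \mathrm{const}\,\varphi_1$; everything else (boundedness, the reduction from general $n$ to $n=1$) is routine and uses only Lemma \ref{uprop} and the spectral decomposition already established. I would therefore structure the proof as: (i) $\varphi_1 \in L^\infty$ and $|\varphi_n| \leq C_n \varphi_1$ via Lemma \ref{uprop}; (ii) $\varphi_1(x) \leq C(1+x^2)^{-2}$ via \cite{KK}; (iii) conclude integrability of every $\varphi_n$.
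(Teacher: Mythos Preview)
Your primary route---reduce to $\varphi_1$ via intrinsic ultracontractivity and then invoke the $x^{-4}$ upper bound on $\varphi_1$ from \cite{KK}---is correct and not circular, since both ingredients are external to this lemma. However, it is \emph{not} the route the paper takes. The paper avoids any pointwise decay information on the eigenfunctions themselves and instead bounds the kernel mass: writing $\int_\R|\varphi_n|\leq e^{\lambda_n}\int_\R f(x)|\varphi_n(x)|\,dx$ with $f(x)=\int_\R u(1,x,y)\,dy=\ex^x\big[e^{-\int_0^1 X_s^2\,ds}\big]$, it proves $f(x)\leq c\,e^{-(|x|-1)^2}$ for $|x|>2$ by conditioning on the first exit time of the Cauchy process from a half-line and using the explicit density of that hitting time (Darling's formula). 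Then $\varphi_n\in L^2$ alone suffices, via Cauchy--Schwarz, to conclude. So the paper uses only $\varphi_n\in L^2$ plus a probabilistic tail estimate on $f$, while you use the \cite{KK} pointwise bound on $\varphi_1$; your argument is shorter (and in fact immediately gives the stronger Lemma~\ref{L1+} as well), whereas the paper's argument keeps Lemma~\ref{L1} logically independent of the sharp $x^{-4}$ asymptotics.

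One caution about your alternative sketch: the naive ball-exit splitting for $\ex^x\big[e^{-\int_0^1 X_s^2\,ds}\big]$ does \emph{not} give an integrable bound as written. The probability that a one-dimensional Cauchy process exits the ball of radius $|x|/2$ about $x$ before time~$1$ is only of order $|x|^{-1}$, so the complementary event contributes a non-integrable $|x|^{-1}$ term, contrary to your claim of ``at least $|x|^{-1-\varepsilon}$''. The paper's fix is precisely to replace the ball by a half-line and exploit the known density of the half-line exit time, which yields the much stronger Gaussian-type decay; absent that refinement, you must fall back on the \cite{KK} bound.
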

\begin{proof}
   For every $x\in \R$ define
   \begin{equation*}
      f(x) = \ex^x \left[e^{-\int_0^1 X_s^2ds}\right]
   \end{equation*}
   and denote by $\tau:=\inf\{s>0: X_s\leq -1\}$ the first exit time of the process $\pro X$ from the
   half-line $(-1,\infty)$. Then, for every $x\in\R$ such that $x>2$ we have
   \begin{eqnarray*}
     f(x) &=& \ex^x \left[e^{-\int_0^1 X_s^2ds} \right] = \ex^0 \left[e^{-\int_0^1 (X_s+x)^2ds} \right]\\
     &\leq& \ex^0 \left[e^{-\int_0^{1\wedge \tau}(X_s+x)^2ds}\right]\\
     &\leq& \ex^0 \left[e^{-(x-1)^2\int_0^{1\wedge \tau}ds}\right] = \ex^0 \left[e^{-(x-1)^2(1\wedge \tau)}\right]\\
     &\leq& e^{-(x-1)^2}+\int_{1}^\infty e^{-(x-1)^2z}g(z)dz\/,
   \end{eqnarray*}
   where $g(z)$ is a density function of the random variable $\tau$. The explicit formula for $g$ was derived by Darling \cite{D} (compare also \cite{B,KKMS}). There exists a constant $c_1>0$ such that for every $x>2$ we have $g(x)<c_1 x^2$ (\cite{B}, p. 286). Thus
   \begin{eqnarray*}
      f(x) &\leq& e^{-(x-1)^2} + c_1 \int_1^\infty e^{-(x-1)^2z}z^2dz \leq
      e^{-(x-1)^2}\left(1+c_1\int_0^\infty e^{-(x-1)^2u}(u+1)^2du\right)\leq c_2 e^{-(x-1)^2}
   \end{eqnarray*}
   whenever $x>2$. The same argument for $x<-2$ shows that there is a constant $c_3$ such that for every $|x|>2$
   \begin{equation*}
      f(x) \leq c_3 e^{-(|x|-1)^2}\/.
   \end{equation*}

   Finally, we get
   \begin{eqnarray*}
     \int_{\R} |\varphi_n(y)|dy &=& e^{\lambda_n}\int_{\R}|T_1\varphi_n(y)|dy \leq e^{\lambda_n}\int_{\R}\int_\R u(1,x,y)|\varphi_n(x)|dxdy\\
     &=& e^{\lambda_n}\int_{\R}f(x)|\varphi_n(x)|dx = e^{\lambda_n}\(\int_{|x|\leq 2}+\int_{|x|>2}\)f(x)|\varphi_n(x)|dx\\
     &\leq& e^{\lambda_n} \(\int_{-2}^2 |\varphi_n(x)|dx + c_3 \int_{|x|>2}e^{-(|x|-1)^2}|\varphi_n(x)|dx\)< \infty\/,
   \end{eqnarray*}
   where the last inequality is a consequence of the fact that $\varphi_n\in L^2(\R)$.
\end{proof}

In fact, a stronger property is true, which we will need below.
\begin{lem}
\label{L1+}
We have that $x^2\varphi_n\in L^1(\R)$, for all $n = 1,2,...$
\end{lem}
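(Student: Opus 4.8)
The plan is to mimic the proof of Lemma \ref{L1}, replacing the crude bound $|\varphi_n|$ on the right-hand side by $x^2|\varphi_n(x)|$ and exploiting the Gaussian decay of the auxiliary function $f$. Concretely, I would start from the identity $\varphi_n = e^{\lambda_n}T_1\varphi_n$ and write
\[
\int_\R x^2|\varphi_n(x)|\,dx
= e^{\lambda_n}\int_\R x^2\left|\int_\R u(1,x,y)\varphi_n(y)\,dy\right|dx
\leq e^{\lambda_n}\int_\R\!\!\int_\R x^2\,u(1,x,y)\,|\varphi_n(y)|\,dy\,dx .
\]
By Tonelli's theorem the order of integration may be exchanged, so the bound becomes $e^{\lambda_n}\int_\R |\varphi_n(y)|\, h(y)\,dy$, where $h(y):=\int_\R x^2\,u(1,x,y)\,dx$. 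Now $u(1,x,y)=u(1,y,x)$ by the symmetry of the semigroup proved in Theorem \ref{prop:FKS}, so $h(y)=\int_\R x^2 u(1,y,x)\,dx = \ex^y\!\left[X_1^2\,e^{-\int_0^1 X_s^2\,ds}\right]$.

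The key estimate is therefore to control $\ex^y\!\left[X_1^2\,e^{-\int_0^1 X_s^2\,ds}\right]$ for large $|y|$. I would repeat the argument of Lemma \ref{L1} verbatim with the extra factor $X_1^2$: by translation invariance, for $y>2$,
\[
\ex^y\!\left[X_1^2\,e^{-\int_0^1 X_s^2\,ds}\right]
= \ex^0\!\left[(X_1+y)^2\,e^{-\int_0^1 (X_s+y)^2\,ds}\right]
\leq \ex^0\!\left[(X_1+y)^2\,e^{-(y-1)^2(1\wedge\tau)}\right],
\]
with $\tau=\inf\{s>0:X_s\leq -1\}$ as before, using that on $\{s\le 1\wedge\tau\}$ one has $X_s+y\ge y-1$. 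Splitting on $\{\tau\ge 1\}$ and $\{\tau<1\}$ and inserting Darling's density bound $g(z)\le c_1 z^2$ exactly as in Lemma \ref{L1}, the exponential $e^{-(y-1)^2 z}$ dominates any polynomial in $z$; the remaining new ingredient is to absorb the factor $(X_1+y)^2 \le 2X_1^2 + 2y^2$, which contributes at worst a polynomial factor $y^2$ (the term $\ex^0[X_1^2\, e^{\cdots}]$ being bounded uniformly in $y$, or simply crudely by $\ex^0[X_1^2]$ — though one must note $\ex^0[X_1^2]=\infty$ for the Cauchy process, so here the presence of the exponential cutoff $e^{-(y-1)^2(1\wedge\tau)}$ is essential and I would instead keep the factor inside and bound $\ex^0[(X_1+y)^2 e^{-(y-1)^2(1\wedge\tau)}]$ directly, using that $X_1^2 e^{-(y-1)^2(1\wedge\tau)}$ has finite expectation because on $\{1\wedge\tau = 1\}$ the path stayed in $(-1,\infty)$ and on $\{\tau<1\}$ the exponential decays). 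The upshot is a bound of the form $h(y)\le c\,(1+y^2)\,e^{-(|y|-1)^2}$ for $|y|>2$, and trivially $h$ is bounded on $[-2,2]$ since $u(1,\cdot,\cdot)$ is bounded.

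Finally I would conclude by splitting the integral:
\[
\int_\R x^2|\varphi_n(x)|\,dx
\le e^{\lambda_n}\left(\int_{|y|\le 2} h(y)|\varphi_n(y)|\,dy
+ c\int_{|y|>2}(1+y^2)e^{-(|y|-1)^2}|\varphi_n(y)|\,dy\right) < \infty,
\]
where the first integral is finite because $h$ is bounded and $\varphi_n\in L^1(\R)$ (Lemma \ref{L1}), and the second because $(1+y^2)e^{-(|y|-1)^2}$ is bounded and $\varphi_n\in L^1(\R)$ (or even $L^2$). The main obstacle is the delicate point that $X_1$ has no second moment under the Cauchy process, so one cannot naively pull $\ex^0[X_1^2]$ out; the fix is to never separate the factor $(X_1+y)^2$ from the exponential weight $e^{-(y-1)^2(1\wedge\tau)}$ and to check that the resulting expectation is finite — essentially because on the event where $\tau \ge 1$ we pay $e^{-(y-1)^2}$, while on $\{\tau < 1\}$ Darling's tail bound $g(z)\le c_1 z^2$ together with $\ex^0[X_1^2\mid \tau = z]$ being controlled still yields an integrable expression after integrating against $e^{-(y-1)^2 z}$.
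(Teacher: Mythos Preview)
Your approach has a genuine gap at exactly the point you flag as ``the main obstacle''. Once you pass from $e^{-\int_0^1 (X_s+y)^2\,ds}$ to the cruder bound $e^{-(y-1)^2(1\wedge\tau)}$, the weight no longer sees $X_1$ at all: on the event $\{\tau\ge 1\}$ it equals the constant $e^{-(y-1)^2}$, so ``keeping $(X_1+y)^2$ together with the exponential'' does nothing there, and you are left with
\[
e^{-(y-1)^2}\,\ex^0\big[(X_1+y)^2;\ \tau\ge 1\big].
\]
But this expectation is infinite. Indeed, on $\{\tau\ge 1\}$ the process is the Cauchy process killed on leaving $(-1,\infty)$, whose time-$1$ transition density from $0$ is bounded below by a constant times $x^{-2}$ for large $x$ (the process can stay near $0$ on $[0,1-\varepsilon]$ and then jump to a neighbourhood of $x$), so $\int_0^\infty x^2\,p_{(-1,\infty)}(1,0,x)\,dx=\infty$. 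Thus the right-hand side of your key inequality is $+\infty$ and the argument collapses. The same problem contaminates the $\{\tau<1\}$ part: $\ex^0[X_1^2\mid \tau=z]$ is not ``controlled'' for the Cauchy process, since a large jump after time $z$ is unconstrained by $\tau$.

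The paper's proof avoids all of this by a completely different, one-line route: it quotes the pointwise ground state bound $\varphi_1(x)\le C\,x^{-4}$ for $|x|>1$ (from \cite{KK}, valid here because $V(x)=x^2$, $d=1$, $\alpha=1$) together with intrinsic ultracontractivity (Lemma~\ref{uprop}), which gives $|\varphi_n(x)|\le c_n\varphi_1(x)\le c_n x^{-4}$ for $|x|>1$, whence $x^2|\varphi_n(x)|\le c_n x^{-2}\in L^1(|x|>1)$. If you want to salvage your probabilistic strategy, you must retain enough of the exponential $e^{-\int_0^1 X_s^2\,ds}$ to damp $X_1^2$ --- for instance, split at time $1/2$ via the Markov property and use the factor $e^{-\int_{1/2}^1 X_s^2\,ds}$, together with time reversal, to control $X_1$ --- but this is substantially more work than what the paper actually needs.
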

\begin{proof}
For $V(x)=x^2$ the estimate
$$
\varphi_1(x) \leq \frac{C}{V(x)|x|^{d+\alpha}}
$$
holds for the first eigenfunction (ground state) $\varphi_1(x)$, see \cite{KK}, Theorem 1. Since the
Feynman-Kac semigroup is intrinsically ultracontractive by Lemma \ref{uprop} above,
\begin{eqnarray}
\label{Eigenfunction_estimate}
|\varphi_n(x)| \leq \frac{c_n}{x^4}\/,\quad |x|>1
\end{eqnarray}
follows. Then the proof of the claim is straightforward.
\end{proof}

\subsection{Eigenvalues}
We will determine the functions $\varphi_n$ and corresponding eigenvalues $\lambda_n$ starting from the relation
(\ref{Eigenproblem}). Denote the Fourier transform of eigenfunctions by
\begin{equation*}
y_{\lambda_n}(x) := \widehat{\varphi}_n(x) = \frac{1}{\sqrt{2\pi}} \int_\R e^{i x z}\varphi_n(z)dz\/.
\end{equation*}
Note that $y_{\lambda_n}$ is well-defined since $\varphi_n \in L^1(\R)$ by Lemma \ref{L1}. Moreover, Lemma
\ref{L1+} implies that $y_{\lambda_n} \in C^2(\R)$, $n=1,2,...$ By performing Fourier transform in (\ref{Eigenproblem})
\begin{equation}
-y_{\lambda_n}''(x)+|x|y_{\lambda_n}(x) = \lambda_n y_{\lambda_n}(x)
\end{equation}
is obtained. We are looking for $y_\lambda\in{C}^2(\R)\cap L^1(\R)$ satisfying
\begin{equation}
   \label{FTEquation}
   -y_\lambda''(x)+|x|y_\lambda(x) = \lambda y_\lambda(x)\/,\quad \lambda>0\/.
\end{equation}
Notice that if a function $y_\lambda(x)$ is a solution of (\ref{FTEquation}), then the function $y_\lambda(-x)$ is also a solution.
Hence it suffices to consider equation (\ref{FTEquation}) only for $x>0$ and construct even and odd solutions on the whole real line.

For $x>0$ equation (\ref{FTEquation}) takes the form
\begin{equation*}
   y_\lambda''(x)-(x-\lambda)y_\lambda(x) = 0\/.
\end{equation*}
On substituting $z=x-\lambda$, $f_\lambda(z) = y_\lambda(x)$ the equation reduces to the Airy differential equation
(\ref{AiryEq}) below
\begin{eqnarray*}
   f_\lambda''(z)-zf_\lambda(z) = 0\/.
\end{eqnarray*}
(For a discussion of Airy functions see the Appendix.) The general solution of the above equation is thus obtained as
\begin{eqnarray*}
   y_\lambda(x) =c_1 \Ai(x-\lambda)+c_2\Bi(x-\lambda)\/.
\end{eqnarray*}
The facts that $y\in L^1(\R)$ and the function $\Bi(z)$ tends to infinity when $z\to\infty$ (see (\ref{BiDefn})) imply $c_2=0$. Without loss of generality we can assume that $c_1=1$.

Finally, to obtain an even function on the whole real line we put
\begin{equation}
  \label{y_formula_even}
  y_\lambda(x) = \Ai(|x|-\lambda)\/, \quad x\in\R.
\end{equation}
We furthermore require that the right derivative at $0$ is zero
\begin{equation*}
   \lim_{x\to0+}y'_\lambda(x) = 0
\end{equation*}
which is equivalent to
\begin{eqnarray}
    \label{Eigenvalues_even_equation}
   \Ai'(-\lambda) = 0\/.
\end{eqnarray}
For odd functions we have
\begin{equation}
  \label{y_formula_odd}
  y_\lambda(x) = \sgn(x) \Ai(|x|-\lambda)\/,\quad x\in\R
\end{equation}
and we require
\begin{eqnarray*}
   \lim_{x\to 0+}y_\lambda(x) &=& 0
\end{eqnarray*}
or equivalently
\begin{eqnarray}
    \label{Eigenvalues_odd_equation}
    \Ai(-\lambda) = 0\/.
\end{eqnarray}
Conditions (\ref{Eigenvalues_even_equation}) and (\ref{Eigenvalues_odd_equation}) together with
equation (\ref{FTEquation}) imply that the functions $y_\lambda$ defined on the real line by
(\ref{y_formula_even}) and (\ref{y_formula_odd}) belong to $C^2(\R)$.

By the Parseval equality and the fact that $(x\Ai^2(x)-(\Ai(x))^2)'=\Ai(x)$, which is an easy
consequence of the Airy equation (\ref{AiryEq}), we get
\begin{eqnarray*}
   \int_{-\infty}^\infty \varphi_n^2(x)dx &=& \int_{-\infty}^\infty (\widehat{\varphi_n})^2(x)dx
   = 2\int_0^\infty \Ai(x+\lambda_n)^2dx
   = 2\left((\Ai'(-\lambda_n))^2+\lambda_n \Ai^2(-\lambda_n)\right)\/.
\end{eqnarray*}
We have thus proved the
following result.
\begin{thm}
\label{eigenval}
The eigenvalues for the problem (\ref{Eigenproblem}) are given by
\begin{eqnarray*}
\lambda_{2k-1} = -a_k'\quad k=1,2\ldots\/,\\
\lambda_{2k} = -a_k\/,\quad k=1,2\ldots\/,
\end{eqnarray*}
where $a_k$ and $a_k'$ denote the zeroes of the functions $\Ai$ and $\Ai'$ in decreasing order. They are
all simple, the eigenfunctions $\varphi_{2k-1}(x)$ are even and $\varphi_{2k}(x)$ are odd. Furthermore,
the Fourier transforms of the $L^2$-normalized eigenfunctions are given by
\begin{equation*}
\widehat{\varphi_n}(x) = \left\{
\begin{array}{cl}
\dfrac{\Ai(|x|-\lambda_n)}{\sqrt{2\lambda_n}\Ai(-\lambda_n)}& n=1,3,5,\ldots\\ \\
\dfrac{\sgn(x) \Ai(|x|-\lambda_n)}{\sqrt{2}\Ai'(-\lambda_n)}& n=2,4,6,\ldots
\end{array}
\right.\/,\quad x\in\R.
\end{equation*}
\end{thm}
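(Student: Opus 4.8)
The plan is to build on the reduction already carried out above the statement: for a fixed eigenvalue $\lambda$, the Fourier transform $y_\lambda = \widehat{\varphi}$ of a corresponding eigenfunction solves (\ref{FTEquation}), lies in $C^2(\R)\cap L^1(\R)$ (by Lemmas \ref{L1} and \ref{L1+}), and on each of the half-lines $(0,\infty)$ and $(-\infty,0)$ the requirement $y_\lambda\in L^1$ together with the blow-up of $\Bi$ at $+\infty$ forces, after normalising the positive part, $y_\lambda(x)=\Ai(x-\lambda)$ for $x>0$ and $y_\lambda(x)=c\,\Ai(-x-\lambda)$ for $x<0$ with some constant $c$. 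From here I would (i) determine $c$ and the admissible $\lambda$ by matching at the origin, which simultaneously delivers simplicity and the even/odd dichotomy; (ii) prove the converse, that every such $\lambda$ genuinely is an eigenvalue of $H$; (iii) arrange the resulting values in increasing order using the interlacing of the zeros of $\Ai$ and $\Ai'$; and (iv) read off the normalising constants from Parseval's identity.

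For step (i): since $y_\lambda\in C^2(\R)$ it is in particular $C^1$ across $0$. Equality of the one-sided values at $0$ gives $(1-c)\Ai(-\lambda)=0$, and equality of the one-sided first derivatives gives $(1+c)\Ai'(-\lambda)=0$ (the derivative of $x\mapsto\Ai(-x-\lambda)$ carrying an extra minus sign). A nontrivial solution of the Airy equation (\ref{AiryEq}) and its derivative have no common zero (uniqueness for the initial value problem), and $0$ is not a zero of $\Ai'$ since $\Ai'(0)\ne0$; hence this pair of equations admits a solution with $y_\lambda\not\equiv0$ precisely in one of the two mutually exclusive cases $\Ai'(-\lambda)=0$, which forces $c=1$ and the even solution (\ref{y_formula_even}), or $\Ai(-\lambda)=0$, which forces $c=-1$ and the odd solution (\ref{y_formula_odd}). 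In either case the solution is determined up to a single scalar, so each eigenspace of $H$ is one-dimensional, every eigenvalue is simple, and the eigenfunction is even or odd according as $\Ai'(-\lambda)=0$ or $\Ai(-\lambda)=0$. In particular every eigenvalue lies in $\{-a_k':k\geq1\}\cup\{-a_k:k\geq1\}$, where $a_k$, $a_k'$ are the (negative) zeros of $\Ai$, $\Ai'$, all of them simple, again by uniqueness for (\ref{AiryEq}).

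For step (ii): conversely, fix $\lambda$ with $\Ai'(-\lambda)=0$ (resp. $\Ai(-\lambda)=0$) and let $y_\lambda$ be given by (\ref{y_formula_even}) (resp. (\ref{y_formula_odd})). A short computation shows that this zero condition is exactly what makes $y_\lambda$ belong to $C^2(\R)$, while the superexponential decay of $\Ai$ at $+\infty$ gives $y_\lambda$, $|x|\,y_\lambda$ and $y_\lambda''$ in $L^2(\R)$, and $-y_\lambda''+|x|y_\lambda=\lambda y_\lambda$ holds by construction. Hence $\varphi:=\mathcal{F}^{-1}y_\lambda$ is a nonzero element of $L^2(\R)$ lying in the domain of $H$, and taking inverse Fourier transform in the last identity yields $H\varphi=\lambda\varphi$, so $\lambda$ is an eigenvalue. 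Together with step (i) this shows that the set of eigenvalues of $H$ is exactly $\{-a_k':k\geq1\}\cup\{-a_k:k\geq1\}$, each simple, with an even eigenfunction at every $-a_k'$ and an odd eigenfunction at every $-a_k$.

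For steps (iii) and (iv): the zeros of $\Ai$ and $\Ai'$ strictly interlace, $0>a_1'>a_1>a_2'>a_2>\cdots$ (a classical property of the Airy functions; see the Appendix), so their negatives satisfy $0<-a_1'<-a_1<-a_2'<-a_2<\cdots\to\infty$; arranging the eigenvalues in increasing order therefore gives $\lambda_{2k-1}=-a_k'$ and $\lambda_{2k}=-a_k$, with $\varphi_{2k-1}$ even and $\varphi_{2k}$ odd. For the normalisation, Parseval's identity, the evenness of $|y_{\lambda_n}|$, and the primitive $(x\Ai^2(x)-\Ai'(x)^2)'=\Ai^2(x)$ (immediate from (\ref{AiryEq})) give
$$
\int_{\R}|y_{\lambda_n}(x)|^2\,dx=2\int_{0}^{\infty}\Ai^2(x-\lambda_n)\,dx=2\bigl(\lambda_n\Ai^2(-\lambda_n)+\Ai'(-\lambda_n)^2\bigr),
$$
which reduces to $2\lambda_n\Ai^2(-\lambda_n)$ when $n$ is odd and to $2\Ai'(-\lambda_n)^2$ when $n$ is even; dividing $y_{\lambda_n}$ by the square root of this quantity produces the stated formulas for $\widehat{\varphi_n}$. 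The one point that is not mere bookkeeping on top of the preceding discussion is the converse in step (ii): one must verify that each candidate $y_\lambda$ really yields an $L^2$-eigenfunction in the domain of the self-adjoint operator $H$ (for instance by checking $T_t\varphi=e^{-\lambda t}\varphi$), and it is precisely this that guarantees that no $-a_k'$ or $-a_k$ is skipped, so that the indexing is exactly as claimed; the interlacing used in step (iii) is classical, and the matching in step (i) and the Parseval computation in step (iv) are both short.
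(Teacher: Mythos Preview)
Your proof is correct and follows the same overall path as the paper: reduce via Fourier transform to the Airy equation, kill the $\Bi$ part by integrability, impose a condition at the origin to pin down the eigenvalues, and normalise using Parseval together with the primitive $(x\Ai^2-\Ai'^2)'=\Ai^2$.

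The organisation differs in one respect worth noting. The paper observes at the outset that $y_\lambda(-x)$ is again a solution and therefore restricts attention to even and odd candidates, imposing the appropriate one-sided boundary condition ($y_\lambda'(0^+)=0$ or $y_\lambda(0^+)=0$) in each case. You instead parametrise the general $L^1$-solution on the two half-lines by a single constant $c$ and match value and derivative at the origin, obtaining the pair $(1-c)\Ai(-\lambda)=0$, $(1+c)\Ai'(-\lambda)=0$. Your route has the advantage that simplicity of the eigenvalues and the even/odd dichotomy fall out of the same two-line linear algebra, whereas the paper's symmetry shortcut leaves simplicity a little implicit. You are also more explicit than the paper about the converse direction (your step (ii)), namely that every zero of $\Ai$ or $\Ai'$ genuinely produces an $L^2$-eigenfunction of $H$; the paper essentially takes this for granted. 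One small quibble: the interlacing $a_1'>a_1>a_2'>a_2>\cdots$ that you invoke for the ordering is not actually stated in the Appendix, so you should cite it as a classical fact about Airy zeros (it follows at once from Rolle's theorem and the absence of common zeros) rather than point there.
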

Making use of the asymptotic expansions and estimates for the zeroes of the Airy function and its
derivative \cite{PS,H} yields
\begin{cor}
\label{Eigenvalues:asymp}
We have
\begin{eqnarray*}
   \lambda_{2k-1} &\sim&g\(\frac{3}{8}\pi(4k-3)\),\quad k\to \infty\/,\\
   \lambda_{2k} &\sim& f\(\frac{3}{8}\pi (4k-1)\),\quad k\to \infty\/.
\end{eqnarray*}
where
\begin{eqnarray}
   \label{eigenvalues:asymptotics:odd}
   g(t) &=& t^{2/3}\(1-\frac{7}{48}t^{-2}+\frac{35}{288}t^{-4}-\frac{181223}{207360}t^{-6}+\frac{18683371}{1244160}t^{-8}-\frac{91145884361}{191102976}t^{-10}\)\/,\\
   \label{eigenvalues:asymptotics:even}
   f(t) &=& t^{2/3}\(1+\frac{5}{48}t^{-2}-\frac{5}{36}t^{-4}+\frac{77125}{82944}t^{-6}- \frac{108056875}{6967296}t^{-8}+\frac{162375596875}{334430208}t^{-10}\)\/.
\end{eqnarray}
Moreover, we have
\begin{eqnarray*}
   \lambda_{2k-1}&\leq& \left(\frac{3\pi}{8}(4k-1)\right)^{2/3}\/,\quad k=1,2,\ldots\/,\\
   \left(\frac{3\pi}{8}(4k-1)\right)^{2/3}\leq \lambda_{2k} &\leq& \left(\frac{3\pi}{8}(4k-1)\right)^{2/3}\left(1+\frac{3}{2} \arctan\left(\frac{5}{18\pi(4k-1)}\right)\right)\/,\quad k=1,2,\ldots
\end{eqnarray*}
\end{cor}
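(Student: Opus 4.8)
The plan is to read the corollary off Theorem~\ref{eigenval} together with the classical asymptotics and the rigorous error bounds for the real zeros of $\Ai$ and $\Ai'$. By Theorem~\ref{eigenval} the eigenvalues are \emph{exactly} $\lambda_{2k-1}=-a_k'$ and $\lambda_{2k}=-a_k$, where $-a_k,-a_k'>0$ are the negative zeros of $\Ai$, resp.\ $\Ai'$, listed in increasing order of modulus. Hence nothing about the operator $H$ is needed beyond this, and the task reduces to quoting the zero asymptotics in the normalization used here and keeping track of signs.

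First I would invoke the asymptotic expansions of the Airy zeros (see \cite{PS,H}; also DLMF~\S9.9): setting $\beta_k=\tfrac{3}{8}\pi(4k-1)$ and $\beta_k'=\tfrac{3}{8}\pi(4k-3)$, one has $-a_k\sim f(\beta_k)$ and $-a_k'\sim g(\beta_k')$ as $k\to\infty$, where $f$ and $g$ are precisely the functions appearing in \eqref{eigenvalues:asymptotics:even} and \eqref{eigenvalues:asymptotics:odd}; the rational coefficients there are the standard ones, obtained by inverting the asymptotic series of $\Ai(-t)$ and $\Ai'(-t)$. Substituting $\lambda_{2k-1}=-a_k'$ and $\lambda_{2k}=-a_k$ gives the two asymptotic equivalences at once.

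Next I would derive the four inequalities from the fact that these asymptotic series are \emph{enveloping}: truncating after any number of terms leaves an error that has the sign of, and is dominated in modulus by, the first omitted term (this is the content of the error estimates in \cite{H}, and it is what turns the $\sim$ into genuine bounds). Since the leading correction in $g$ is the negative term $-\tfrac{7}{48}t^{-2}$, one gets $-a_k'<(\beta_k')^{2/3}<\beta_k^{2/3}=\big(\tfrac{3}{8}\pi(4k-1)\big)^{2/3}$, the asserted bound on $\lambda_{2k-1}$. Since the leading correction in $f$ is the positive term $+\tfrac{5}{48}t^{-2}$, one gets $-a_k>\beta_k^{2/3}$, the left-hand bound on $\lambda_{2k}$; truncating one term further (the next coefficient, $-\tfrac{5}{36}t^{-4}$, being negative) gives $-a_k<\beta_k^{2/3}\big(1+\tfrac{5}{48}\beta_k^{-2}\big)$. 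As $\beta_k^{-2}=\tfrac{64}{9\pi^2(4k-1)^2}$, it remains only to verify the elementary inequality $\tfrac{5}{48}\cdot\tfrac{64}{9\pi^2(4k-1)^2}\le\tfrac{3}{2}\arctan\!\big(\tfrac{5}{18\pi(4k-1)}\big)$ for every $k\ge1$; using concavity of $\arctan$ on $[0,1]$, i.e.\ $\arctan x\ge\tfrac{\pi}{4}x$ there, this reduces to $\tfrac{64}{9\pi^2}\le 4k-1$, which is clear.

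I expect the only point requiring genuine care to be the enveloping/error-bound property underlying the inequalities: taken from \cite{H} it is immediate, but a self-contained proof would need a Liouville--Green analysis of $\Ai$ and $\Ai'$ on $(-\infty,0]$ with explicit control of the remainder together with the monotonicity of the phase function. Checking the long lists of rational coefficients in $f$ and $g$ against the published tables, and the closing arctangent estimate, are entirely routine.
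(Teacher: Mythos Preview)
Your proposal is correct and follows the paper's own approach: the paper's entire proof is the single line ``Making use of the asymptotic expansions and estimates for the zeroes of the Airy function and its derivative \cite{PS,H} yields'', so both you and the authors simply feed the identification $\lambda_{2k-1}=-a_k'$, $\lambda_{2k}=-a_k$ from Theorem~\ref{eigenval} into the cited literature on Airy zeros. Your write-up is in fact more informative than the paper's, since you spell out the enveloping mechanism behind the inequalities and reduce the $\arctan$ upper bound to an elementary estimate; the only caveat, which you already flag, is that the enveloping/error-sign property must hold for \emph{every} $k\ge 1$ and not merely asymptotically, and for that you are right to lean on \cite{H,PS} rather than attempt a self-contained argument.
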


\begin{rem}
A numerical calculation for the first few eigenvalues gives
\begin{eqnarray*}
   \lambda_1 &\cong& 1.01879297164747\\
   \lambda_2 &\cong& 2.33810741045976\\
   \lambda_3 &\cong& 3.24819758217983\\
   \lambda_4 &\cong& 4.08794944413097\\
   \lambda_5 &\cong& 4.82009921117874\\
   \lambda_6 &\cong& 5.52055982809555\/.
\end{eqnarray*}
\end{rem}

Using the above asymptotic formulae we can derive a result on the asymptotic behaviour of the
trace of the semigroup at zero.
\begin{thm}
\label{trestim}
We have
\begin{eqnarray*}
   \lim_{t\to0^{+}}t^{3/2}\sum_{n=1}^\infty e^{-\lambda_n t} = \frac{1}{\sqrt{\pi}}\/.
\end{eqnarray*}
\end{thm}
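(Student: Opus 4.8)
The plan is to extract the small-$t$ asymptotics of the partition function $\sum_n e^{-\lambda_n t}$ from the known asymptotics of the eigenvalues. By Theorem \ref{eigenval} the spectrum splits into the $\Ai'$-zeros and the $\Ai$-zeros, so I would write
$$
\sum_{n=1}^\infty e^{-\lambda_n t} = \sum_{k=1}^\infty e^{-t(-a_k')} + \sum_{k=1}^\infty e^{-t(-a_k)},
$$
and treat both sums by the same argument. The leading behaviour of the zeros is $-a_k \sim (3\pi k/2)^{2/3}$ and $-a_k' \sim (3\pi k /2)^{2/3}$ as $k\to\infty$ (this follows from Corollary \ref{Eigenvalues:asymp}, since $\tfrac{3}{8}\pi(4k-1)\sim \tfrac{3}{2}\pi k$ and $g(t),f(t)\sim t^{2/3}$), so that both sequences behave to leading order like $c\,k^{2/3}$ with $c=(3\pi/2)^{2/3}$.

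The core of the argument is then a Tauberian/Laplace-method computation: if $\mu_k \sim c\,k^{2/3}$, then as $t\to 0^+$,
$$
\sum_{k\geq 1} e^{-t\mu_k} \sim \int_0^\infty e^{-t c x^{2/3}}\,dx
= \frac{1}{(ct)^{3/2}}\int_0^\infty e^{-u^{2/3}}\,du
= \frac{1}{(ct)^{3/2}}\cdot\frac{3}{2}\Gamma\!\Big(\frac{3}{2}\Big)
= \frac{1}{(ct)^{3/2}}\cdot\frac{3\sqrt\pi}{4}.
$$
With $c=(3\pi/2)^{2/3}$ one gets $c^{3/2}=3\pi/2$, so each of the two sums contributes $\tfrac{1}{t^{3/2}}\cdot\tfrac{3\sqrt\pi}{4}\cdot\tfrac{2}{3\pi} = \tfrac{1}{2\sqrt\pi\, t^{3/2}}$, and adding the even and odd parts gives $\tfrac{1}{\sqrt\pi\, t^{3/2}}$, which is the claim. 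Concretely I would justify the passage from sum to integral either by a direct sandwiching of $\sum_k e^{-t\mu_k}$ between integrals $\int_0^\infty e^{-t c (x\mp 1)^{2/3}}dx$ using monotonicity of $x\mapsto e^{-tcx^{2/3}}$, or by invoking Karamata's Tauberian theorem: the counting function $N(\lambda)=\#\{n:\lambda_n\leq\lambda\}$ satisfies $N(\lambda)\sim 2(\lambda/c)^{3/2}=\tfrac{4}{3\pi}\lambda^{3/2}$ (two families, each with $\mu_k\leq\lambda \iff k \lesssim (\lambda/c)^{3/2}$), hence $\sum_n e^{-\lambda_n t}=\int_0^\infty e^{-\lambda t}dN(\lambda)\sim \tfrac{4}{3\pi}\Gamma(5/2)\,t^{-3/2}=\tfrac{4}{3\pi}\cdot\tfrac{3\sqrt\pi}{4}\,t^{-3/2}=t^{-3/2}/\sqrt\pi$.

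The main obstacle is purely bookkeeping: making the replacement of $-a_k,-a_k'$ by their asymptotic equivalents $c\,k^{2/3}$ rigorous inside the exponential sum, uniformly enough that the error is $o(t^{-3/2})$. This requires a quantitative version of the eigenvalue asymptotics — i.e. $|{-a_k} - c\,k^{2/3}| \leq C k^{-something}$ or at least $-a_k = c\,k^{2/3}(1+o(1))$ with a controlled rate — which is available from the expansions \eqref{eigenvalues:asymptotics:odd}–\eqref{eigenvalues:asymptotics:even} together with the explicit bounds on the Airy zeros quoted from \cite{PS,H}. Once that uniform control is in hand, splitting the sum at $k\sim t^{-3/4}$ (small $k$ contribute $O(t^{-3/4})=o(t^{-3/2})$ terms each $\leq 1$, large $k$ are handled by the integral comparison) closes the estimate; the constant $1/\sqrt\pi$ drops out of the $\Gamma$-function evaluation exactly as above.
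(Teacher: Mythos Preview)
Your proposal is correct and follows essentially the same route as the paper: split the trace into the two subseries over $\{-a_k'\}$ and $\{-a_k\}$, replace the eigenvalues by their leading asymptotic $(3\pi k/2)^{2/3}$ using Corollary~\ref{Eigenvalues:asymp}, and compare the resulting sums with the integral $\int_0^\infty e^{-tcx^{2/3}}\,dx$ via monotonicity to extract $\tfrac{1}{2\sqrt\pi}$ from each piece. The paper implements exactly your option~(a) (the sandwich between integrals), using the slightly sharper form $(\tfrac{3\pi}{2}(k-\tfrac34))^{2/3}$ so that the error in the exponent is uniformly bounded and no $k$--splitting is needed; your Karamata alternative~(b) is a valid higher-level shortcut to the same limit.
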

\begin{proof}
   We divide the series into two components
   \begin{equation*}
      F(t) = \sum_{k=1}^\infty e^{-\lambda_{2k-1}t} = \sum_{k=1}^\infty e^{a_k't}\/,\quad G(t) = \sum_{k=1}^\infty e^{-\lambda_{2k}t} = \sum_{k=1}^\infty e^{a_kt}\/.
   \end{equation*}
   By (\ref{eigenvalues:asymptotics:odd}) we get that for every $t\in [0,1]$
   \begin{eqnarray*}
      \sum_{k=1}^\infty e^{-(3/2\pi(k-3/4)^{2/3}t)}\leq F(t)\leq  e^{-ct}\sum_{k=1}^\infty e^{-(3/2\pi(k-3/4)^{2/3}t)}
   \end{eqnarray*}
   for some constant $c>0$. Moreover, the function $x\mapsto e^{-(3/2\pi(x-3/4)^{2/3}t)}$ is strictly positive and non-increasing on $[1,\infty)$. Thus
   \begin{eqnarray*}
       e^{-ct}\int_1^\infty e^{-(3/2\pi(x-3/4)^{2/3}t)} dx\leq F(t) \leq \left(e^{-(3/8\pi)^{2/3}t} + \int_1^\infty e^{-(3/2\pi(x-3/4)^{2/3}t)} dx\right)\/.
   \end{eqnarray*}
   A substitution yields
   \begin{equation*}
      \int_1^\infty e^{-(3/2\pi(x-3/4)^{2/3}t)} dx = \frac{1}{t^{3/2}\pi} \int_{(3/8\pi)^{2/3}t}^\infty e^{-u}u^{1/2}du
   \end{equation*}
   and we obtain $\lim_{t\to 0^{+}} t^{3/2}F(t) = \pi^{-1}\Gamma(3/2)=\frac{1}{2\sqrt{\pi}}$. In the same way we get $\lim_{t\to 0^{+}} t^{3/2}G(t) = \frac{1}{2\sqrt{\pi}}$ and this completes the proof.
\end{proof}

Using the estimates for the eigenvalues given in Corollary \ref{Eigenvalues:asymp} we obtain an estimate for the spectral gap.
\begin{cor}
\label{specgap}
We have
\begin{eqnarray*}
   \lambda_2-\lambda_1\geq \left(\frac{3\pi}{8}\right)^{2/3}(3^{2/3}-1)\/.
\end{eqnarray*}
\end{cor}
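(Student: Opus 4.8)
The plan is to read off the spectral gap estimate directly from the explicit eigenvalue bounds of Corollary~\ref{Eigenvalues:asymp}, specialized to $k=1$. Recall from Theorem~\ref{eigenval} that $\lambda_1=-a_1'$ and $\lambda_2=-a_1$, so that $\lambda_2-\lambda_1=a_1'-a_1$ is controlled as soon as we have an upper bound on $\lambda_1$ and a lower bound on $\lambda_2$.

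For the lower bound on $\lambda_2$ I would take $k=1$ in the even-index estimate of Corollary~\ref{Eigenvalues:asymp}, which yields
\begin{equation*}
\lambda_2 \geq \Big(\tfrac{3\pi}{8}\cdot 3\Big)^{2/3} = 3^{2/3}\Big(\tfrac{3\pi}{8}\Big)^{2/3}.
\end{equation*}
For the upper bound on $\lambda_1$ I would use the $k=1$ instance of the odd-index estimate, $\lambda_1 \leq \big(\tfrac{3\pi}{8}\big)^{2/3}$. Subtracting the two gives
\begin{equation*}
\lambda_2-\lambda_1 \geq 3^{2/3}\Big(\tfrac{3\pi}{8}\Big)^{2/3}-\Big(\tfrac{3\pi}{8}\Big)^{2/3} = \Big(\tfrac{3\pi}{8}\Big)^{2/3}\big(3^{2/3}-1\big),
\end{equation*}
which is exactly the assertion.

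I do not expect any genuine obstacle, because all of the substance is already packaged into Corollary~\ref{Eigenvalues:asymp}, which in turn rests on the classical monotone and asymptotic bounds for the negative zeros of $\Ai$ and $\Ai'$ (see~\cite{PS,H}). The one point that deserves a moment's care is to invoke the \emph{sharp} odd-index upper bound at $k=1$, namely $\lambda_1\leq(\tfrac{3\pi}{8})^{2/3}$ (the argument being $\tfrac{3\pi}{8}(4k-3)$ at $k=1$), rather than a weaker variant, since otherwise the two $k=1$ estimates collapse to the trivial $\lambda_2\geq\lambda_1$. As a sanity check, the numerical values $\lambda_1\approx1.019$ and $\lambda_2\approx2.338$ satisfy $(\tfrac{3\pi}{8})^{2/3}\approx1.116>\lambda_1$ and $(\tfrac{9\pi}{8})^{2/3}\approx2.320<\lambda_2$, so the resulting gap estimate $\approx1.205$ sits comfortably below the true gap $\approx1.319$.
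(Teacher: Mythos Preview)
Your proposal is correct and matches the paper's approach exactly: the corollary is read off from the $k=1$ instances of the eigenvalue bounds in Corollary~\ref{Eigenvalues:asymp}. You have also correctly spotted that the needed odd-index bound is $\lambda_{2k-1}\leq\big(\tfrac{3\pi}{8}(4k-3)\big)^{2/3}$ (consistent with the asymptotic stated just above it and with~\cite{PS,H}); the printed ``$4k-1$'' in that inequality is evidently a typo, and without the sharper form the argument would indeed collapse to the triviality you point out.
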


\subsection{Eigenfunctions}
Next we derive the asymptotic behaviour of the eigenfunctions. We use the notation $p_n, q_n$ as in (\ref{AiDerivatives})
below.
\begin{thm}\label{Eigenfunctions:asymptotics}
For every $k=1,\ldots$ and $N=2,3,\ldots$ we have
\begin{eqnarray*}
\varphi_{2k-1}(z) = \sqrt{\frac{2}{-a_k'}}\(\frac{p_3(a_k')}{z^4}-\frac{p_5(a_k')}{z^6}+\ldots+(-1)^{N}\frac{p_{2N-1}(a_k')}{z^{2N}}\)+O\(\frac{1}{z^{2N+2}}\)
\/,\quad \textrm{as }|z|\to\infty\/.
\end{eqnarray*}
For every $k=1,2,\ldots$ and $N=2,3,\ldots$ we have
\begin{eqnarray*}
\varphi_{2k}(z) = \sqrt{2}\(\frac{q_4(a_k)}{z^{5}}-\frac{q_6(a_k)}{z^7}+\ldots+(-1)^N\frac{q_{2N}(a_k)}{z^{2N+1}}\)+O\(\frac{1}{z^{2N+3}}\)\/,
\quad \textrm{as }|z|\to\infty\/.
\end{eqnarray*}
\end{thm}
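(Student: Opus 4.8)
The plan is to recover each eigenfunction $\varphi_n$ from the explicit formula for its Fourier transform in Theorem~\ref{eigenval} by Fourier inversion, and then to extract the asymptotics by repeated integration by parts. Since $\Ai$ and all of its derivatives decay super-exponentially on $(-\lambda_n,\infty)$ (see the Appendix) and $\widehat{\varphi_n}$ is continuous, we have $\widehat{\varphi_n}\in L^1(\R)$, so $\varphi_n(z)=\frac{1}{\sqrt{2\pi}}\int_\R e^{-izx}\widehat{\varphi_n}(x)\,dx$ holds pointwise. Because $\widehat{\varphi_{2k-1}}$ is even and $\widehat{\varphi_{2k}}$ is odd, this reduces, up to the normalization constants of Theorem~\ref{eigenval}, to the large-$z$ behaviour of the half-line integrals
$$
I(z)=\int_0^\infty \cos(zx)\,\Ai(x-\lambda_n)\,dx,\qquad
J(z)=\int_0^\infty \sin(zx)\,\Ai(x-\lambda_n)\,dx;
$$
the regime $z\to-\infty$ then follows from the parity of $\varphi_n$.

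First I would integrate by parts repeatedly in $I(z)$ and $J(z)$. On $(0,\infty)$ the integrand is $C^\infty$, and every boundary contribution at $+\infty$ vanishes by the decay of the Airy function, so each pair of integrations by parts produces a factor $z^{-2}$ and a boundary term at $x=0$ of the form $\pm\Ai^{(m)}(-\lambda_n)$. One checks that the orders $m$ which survive are the odd ones for $I(z)$ and the even ones for $J(z)$, with alternating signs, so that after $N-1$ such pairs
$$
I(z)=\sum_{j=2}^{N}(-1)^{j}\,\frac{\Ai^{(2j-1)}(-\lambda_n)}{z^{2j}}+\frac{(-1)^N}{z^{2N}}\int_0^\infty\cos(zx)\,\Ai^{(2N)}(x-\lambda_n)\,dx,
$$
and analogously for $J(z)$ with boundary terms $\Ai^{(2j)}(-\lambda_n)$ and odd powers of $z$.

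The decisive step is to combine the eigenvalue condition with the reduction formula $\Ai^{(m)}(x)=p_m(x)\Ai(x)+q_m(x)\Ai'(x)$ of (\ref{AiDerivatives}). For the even eigenfunctions $\lambda_{2k-1}=-a_k'$, hence $\Ai'(-\lambda_{2k-1})=\Ai'(a_k')=0$, so $\Ai^{(2j-1)}(a_k')=p_{2j-1}(a_k')\Ai(a_k')$; the common factor $\Ai(a_k')$ cancels the denominator $\sqrt{2\lambda_n}\,\Ai(-\lambda_n)$ of Theorem~\ref{eigenval}. Moreover the $z^{-2}$-boundary term is $\Ai'(a_k')=0$, while the first nonzero one, at order $z^{-4}$, is $\Ai'''(a_k')=\Ai(a_k')=p_3(a_k')\Ai(a_k')$. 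Symmetrically, for the odd eigenfunctions $\lambda_{2k}=-a_k$, hence $\Ai(-\lambda_{2k})=\Ai(a_k)=0$, so $\Ai^{(2j)}(a_k)=q_{2j}(a_k)\Ai'(a_k)$ with the factor $\Ai'(a_k)$ cancelling the denominator $\sqrt2\,\Ai'(-\lambda_n)$; here the $z^{-1}$- and $z^{-3}$-boundary terms are $\Ai(a_k)=0$ and $\Ai''(a_k)=a_k\Ai(a_k)=0$, and the first surviving term, at order $z^{-5}$, is $\Ai^{(4)}(a_k)=q_4(a_k)\Ai'(a_k)$. Collecting the surviving terms gives precisely the stated series, with coefficients $p_{2j-1}(a_k')$ and $q_{2j}(a_k)$ and alternating signs.

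Finally, to sharpen the remainder from $O(z^{-2N})$ to the claimed $O(z^{-2N-2})$, I would carry out one further integration by parts on the tail integral $\int_0^\infty\cos(zx)\,\Ai^{(2N)}(x-\lambda_n)\,dx$ (and its sine analogue): its leading piece is exactly the $z^{-2N-2}$-term of the expansion, and the remaining integral is bounded uniformly in $z$ because $\Ai^{(2N+2)}(\cdot-\lambda_n)\in L^1(0,\infty)$, which yields the uniform bound. The main point requiring care is this remainder estimate together with the bookkeeping needed to match the constants and signs to the polynomials $p_m$, $q_m$ of the Appendix; there is no conceptual obstacle, the argument being driven throughout by the super-exponential decay of $\Ai$ and the two vanishing conditions $\Ai'(a_k')=0$ and $\Ai(a_k)=0$.
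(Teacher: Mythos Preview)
Your approach is essentially the same as the paper's: both proofs start from the Fourier inversion formula for $\varphi_n$ in Theorem~\ref{eigenval}, integrate by parts repeatedly in the resulting cosine/sine integrals of $\Ai(\cdot-\lambda_n)$, use the eigenvalue conditions $\Ai'(a_k')=0$ and $\Ai(a_k)=0$ together with (\ref{AiDerivatives}) to reduce the surviving boundary terms to $p_{2j-1}(a_k')\Ai(a_k')$ and $q_{2j}(a_k)\Ai'(a_k)$, and then bound the remainder by the $L^1$-norm of a high derivative of $\Ai$. The only slip is bookkeeping: going from the $z^{-2N}$ remainder to $O(z^{-2N-2})$ requires one further \emph{pair} of integrations by parts (not one), exactly as your mention of $\Ai^{(2N+2)}$ already indicates.
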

\begin{proof}
%Using the asymptotic expansion of the modified Bessel function
% \begin{eqnarray*}
%    K_\vartheta(z) \sim \frac{e^{-z}}{\sqrt{z}}\/, \quad z\to\infty
% \end{eqnarray*}
% and the differential relation
% \begin{equation*}
%    \dfrac{d}{du}\(u^{1/2}K_{1/3}\(\frac{2}{3}u^{3/2}\)\) = \frac23 u K_{3/2}\(\frac{2}{3}u^{3/2}\)
% \end{equation*}
% it is easily seen that
% \begin{eqnarray}
% \label{Ai_infty}
%    \Ai(u) &\sim& e^{-\frac{2}{3}u^{3/2}}\/,\quad u\to\infty\/,\\
%    \label{AiPrime_infty}
%    \Ai'(u) &\sim& \sqrt{u}e^{-\frac{2}{3}u^{3/2}}\/,\quad u\to\infty\/.
% \end{eqnarray}
  For $k=1,2,\ldots$, we have $\lambda_{2k-1}=-a_k'$,  and
 \begin{eqnarray}
    \label{eigenfunction:odd:integral}
    \varphi_{2k-1}(z) = \sqrt{\frac{2}{-a_k'}}\frac{1}{\Ai(a_k')}\int_0^\infty \Ai(u+a_k')\cos zu\, du\/.
 \end{eqnarray}
 Integration by parts ($2N+2$ times) together with (\ref{Airy:asymp:infty}), (\ref{AiryPrime:asymp:infty}) and (\ref{AiDerivatives}) give
 \begin{eqnarray*}
     \int_0^\infty \Ai(u+a_k')\cos zu\, du &=& \sum_{s=0}^{N} \left.\frac{\sin zu}{z^{2s+1}} (-1)^s \Ai^{(2s)}(u+a_k')\right|_{0}^\infty+\\
     && + \sum_{s=1}^{N} \left.\frac{\cos zu}{z^{2s}}(-1)^{s-1} \Ai^{(2s-1)}(u+a_k')\right|_{0}^\infty +R_N(z)\\
    &=& \sum_{s=1}^{N} \frac{1}{z^{2s}}(-1)^s \Ai^{(2s-1)}(a_k')+R_N(z)\\
    &=& \sum_{s=1}^{N} \frac{p_{2s-1}(a_k')}{z^{2s}}(-1)^s \Ai(a_k')+R_N(z)\/,
 \end{eqnarray*}
 where
 \begin{eqnarray*}
    R_N(z) &=& \left.\frac{\cos zu}{z^{2N+2}}\Ai^{(2N+1)}(u+a_k')\right|_0^\infty+\frac{1}{z^{2N+2}}\int_0^\infty \Ai^{(2N+2)}(u+a_k')\cos zu du\\
    &=& -\frac{p_{2N+1}(a_k')\Ai(a_k')}{z^{2N+2}}+\frac{1}{z^{2N+2}}\int_0^\infty \Ai^{(2N+2)}(u+a_k')\cos zu du\/.
 \end{eqnarray*}
 Using the asymptotic relations (\ref{Airy:asymp:infty}) and (\ref{AiryPrime:asymp:infty}) together with formula (\ref{AiDerivatives}) we get
 \begin{eqnarray*}
    |R_N(z)| \leq \frac{1}{|z|^{2N+2}}\(|p_{2N+1}(a_k')\Ai(a_k')|+ \int_0^\infty |\Ai^{(2N+2)}(u+a_k')| du\)= \frac{c_{2k-1,N}}{|z|^{2N+2}}\/.
 \end{eqnarray*}
 Notice that $p_1(x)\equiv 0$, which completes the proof for this case.

 For $k=1,2,\ldots$ we have
 \begin{equation*}
    \varphi_{2k}(z) = \frac{\sqrt{2}}{\Ai'(a_k)}\int_0^\infty \Ai(u+a_k)\sin zu du.
 \end{equation*}
 Similar arguments give
 \begin{eqnarray*}
    \frac{\Ai'(a_k)}{\sqrt{2}} \varphi_{2k}(z) &=& \sum_{s=0}^{N} \left.\frac{\cos zu}{z^{2s+1}}(-1)^{s+1} \Ai^{(2s)}(u+a_k)\right|_{0}^\infty
    + \sum_{s=0}^{N} \left.\frac{\sin zu}{z^{2s+2}} (-1)^s \Ai^{(2s+1)}(u+a_k)\right|_{0}^\infty +R_N(z)\\
    &=& \sum_{s=0}^{N} \frac{(-1)^s}{z^{2s+1}} \Ai^{(2s)}(a_k)+R_N(z)\\
    &=& \sum_{s=1}^{N} \frac{(-1)^s q_{2s}(a_k)}{z^{2s+1}} \Ai'(a_k)+R_N(z)\/,
 \end{eqnarray*}
 where
 \begin{eqnarray*}
    R_N(z) &=& \left.\frac{\cos zu}{z^{2N+3}}(-1)^{N+2}\Ai^{(2N+2)}(u+a_k)\right|_0^\infty  + \frac{(-1)^{N+1}}{z^{2N+3}}\int_0^\infty \Ai^{(2N+3)}(u+a_k)\cos zu du\\
    &=& \frac{1}{z^{2N+3}}\(q_{2N+3}(a_k) \Ai'(a_k)+\int_0^\infty \Ai^{(2N+3)}(u+a_k)\cos zu du\)\/.
 \end{eqnarray*}
Thus we get $|R_N(z)|\leq c_{2k,N}|z|^{-2N-3}$. Finally, notice that $q_2(x)\equiv 0$. This completes the proof.
\end{proof}
%Consider two families of polynomials $w_n^1(x)$ and $w_n^2(x)$ given by the recurance relation
%\begin{equation}
%\label{w:recurence}
%w_n^i(x) = -\frac{nx}{n+1}w_{n-1}^i(x)-\frac{n(n-1)}{n+1}w_{n-2}^i(x)\/,\quad n\geq 2\/,i=1,2
%\end{equation}
%and the initial conditions $w_0^1(x)=-x$, $w_1^1(x)=\frac{1}{2}(x^2-1)$ and $w_0^2(x)=1$, $w_1^2(x)=-\frac{1}{2}x$.
\begin{thm}
   The eigenfunctions $\varphi_n$ are analytic functions on $\R$. Their Maclaurin expansions are given by
   \begin{eqnarray*}
       \varphi_{2k-1}(x) &=& \sqrt{\frac{2}{-a_k'}}\frac{1}{\Ai(a_k')}\,\sum_{m=0}^\infty \frac{w_{2m}(a_k')(-1)^m}{(2m)!}\, x^{2m}\/,\\
       \varphi_{2k}(x) &=&  \frac{\sqrt{2}}{\Ai'(a_k)}\,\sum_{m=0}^\infty \frac{w_{2m+1}(a_k)(-1)^m}{(2m+1)!}x^{2m+1}\/,
   \end{eqnarray*}
   where $k=1,2,\ldots$ and
   \begin{eqnarray*}
       w_n(x) = \int_0^\infty \Ai(u+x)u^ndu\/.
   \end{eqnarray*}
\end{thm}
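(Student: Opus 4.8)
The plan is to read off both statements from the two integral representations of the eigenfunctions already obtained in the proof of Theorem~\ref{Eigenfunctions:asymptotics} (themselves coming from inverting the Fourier transforms of Theorem~\ref{eigenval}):
\begin{equation*}
\varphi_{2k-1}(z)=\sqrt{\frac{2}{-a_k'}}\,\frac{1}{\Ai(a_k')}\int_0^\infty\Ai(u+a_k')\cos zu\,du,\qquad
\varphi_{2k}(z)=\frac{\sqrt{2}}{\Ai'(a_k)}\int_0^\infty\Ai(u+a_k)\sin zu\,du.
\end{equation*}
First I would substitute into these integrals the everywhere-convergent Taylor series $\cos zu=\sum_{m=0}^\infty(-1)^m(zu)^{2m}/(2m)!$ and $\sin zu=\sum_{m=0}^\infty(-1)^m(zu)^{2m+1}/(2m+1)!$. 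Granting that summation and integration may be interchanged, and recalling that $\int_0^\infty\Ai(u+x)u^n\,du=w_n(x)$, one lands exactly on the two series in the statement; and since a power series that converges on all of $\C$ defines an entire function, this simultaneously yields analyticity of $\varphi_n$ on $\R$.

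The substantive point is therefore the interchange, which rests on the super-exponential decay of the Airy function. Fix $x\in\R$. Combining the asymptotics of $\Ai$ at $+\infty$ (see~(\ref{Airy:asymp:infty})) with the continuity of $\Ai$ on $\R$, there is a constant $C_x$ with $|\Ai(u+x)|\le C_x e^{-\frac12 u^{3/2}}$ for all $u\geq0$, so that for every $z\in\C$ the function $u\mapsto|\Ai(u+x)|\cosh(|z|u)$ is integrable on $[0,\infty)$. The partial sums of the series for $\cos zu$ and $\sin zu$ are bounded in modulus by $\cosh(|z|u)$, so dominated convergence justifies the term-by-term integration, while Tonelli's theorem with the same dominating function shows that $\sum_{m=0}^\infty|z|^{2m}w_{2m}(x)/(2m)!<\infty$ (and its odd analogue). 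This proves that the displayed expansions hold for every $z\in\C$, whence each $\varphi_n$ extends to an entire function and is in particular real-analytic on $\R$. If a quantitative bound is wanted, the substitution $v=u^{3/2}$ gives $w_n(x)\le C_x'\,c^{\,n}\,\Gamma\!\bigl(\tfrac{2n+2}{3}\bigr)$ with $c$ independent of $n$, so the coefficients decay faster than any geometric sequence — in fact $\varphi_n$ turns out to be entire of order $3$.

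I expect the only real obstacle to be this domination step: verifying that the $e^{-cu^{3/2}}$ decay of $\Ai$ dominates $\cosh(|z|u)$ uniformly in the truncation index and also tames the moment growth of $w_n(x)$. Everything else — the Fourier inversion already done in Theorem~\ref{eigenval}, the term-by-term integration, and the identification of the Taylor coefficients — is routine once that estimate is recorded; one should only note in passing that all the $w_n(a_k')$ and $w_n(a_k)$ are finite by the same bound, so these really are power-series expansions of the eigenfunctions.
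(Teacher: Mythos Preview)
Your proposal is correct and follows essentially the same route as the paper: both start from the cosine/sine integral representations derived from Theorem~\ref{eigenval}, expand the trigonometric kernel in its Taylor series, and justify the interchange via the super-exponential decay $|\Ai(u+x)|\le C_x e^{-c u^{3/2}}$ coming from~(\ref{Airy:asymp:infty}). The only cosmetic difference is that the paper bounds the moment integrals by $\Gamma\bigl(\tfrac{4m+2}{3}\bigr)$ and checks convergence of the resulting series by the ratio test with Stirling's formula, whereas you dominate the partial sums directly by $\cosh(|z|u)$ and invoke dominated convergence---a slightly slicker packaging of the same estimate.
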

\begin{proof}
 For $k=1,2,\ldots$ we have
 \begin{eqnarray*}
   \varphi_{2k-1}(x) &=& \sqrt{\frac{2}{-a_k'}}\frac{1}{\Ai(a_k')} \int_0^\infty \Ai(u+a_{k}')\cos xu du\\
      &=& \sqrt{\frac{2}{-a_k'}}\frac{1}{\Ai(a_k')}\int_0^\infty \Ai(u+a_{k}')\sum_{m=0}^\infty \frac{(xu)^{2m}}{(2m)!}(-1)^m du\/.
 \end{eqnarray*}
  Moreover, by (\ref{Airy:asymp:infty}) it is seen that there exists a constant $c_k>0$ such that $|\Ai(u+a_{k}')|< c_k e^{-\frac{2}{3}u^{3/2}}$ for all $u>0$. Hence
   \begin{eqnarray*}
      \sum_{m=0}^\infty \int_0^\infty \left|\frac{x^{2m}u^{2m}}{(2m)!}(-1)^m \Ai(u+a_{k}') \right| du &=& \sum_{m=0}^\infty \frac{|x|^{2m}}{(2m)!}\int_0^\infty |\Ai(u+a_k')| u^{2m}du\\
      &\leq& c_k \sum_{m=0}^\infty \frac{|x|^{2m}}{(2m)!} \int_{0}^\infty e^{-\frac{2}{3}u^{3/2}} u^{2m}du\\
      &=& c_k \sum_{m=0}^\infty \frac{(|x|^{2})^m}{(2m)!} \(\frac{3}{2}\)^{\frac{4m-1}{3}}\Gamma\left(\frac{4m+2}{3}\right)\/.
   \end{eqnarray*}
   By putting $d_m = \frac{1}{(2m)!} \(\frac{3}{2}\)^{\frac{4m-1}{3}}\Gamma\left(\frac{4m+2}{3}\right)$ and making use of Stirling's formula $\Gamma(x) \sim \sqrt{2\pi} e^{-x} x^{x-1/2}$, as $x\to \infty$ we obtain
   \begin{eqnarray*}
      \frac{d_m}{d_{m+1}} &=& (2m+1)(2m+2)\left(\frac{2}{3}\right)^{4/3}\frac{\Gamma\left(\frac{4m+2}{3}\right)}{\Gamma\left(\frac{4m+6}{3}\right)}\\
      &\sim&  (2m+1)(2m+2)\left(\frac{2}{3}\right)^{4/3}e^{4/3}\left(\frac{4m+2}{4m+6}\right)^{\frac{4m+2}{3}-\frac{1}{2}} \left(\frac{3}{4m+6}\right)^{4/3}\stackrel{m\to\infty}{\longrightarrow} \infty\/.
   \end{eqnarray*}
   Thus Fubini's theorem applies to get
   \begin{eqnarray*}
      \varphi_{2k-1}(x) &=& \sqrt{2}\sum_{m=0}^\infty \frac{(-1)^m}{\sqrt{{-a_k'}}(2m)!}\left(\frac{1}{\Ai(a_k')}\int_0^\infty \Ai(u+a_k')u^{2m}du\right)x^{2m}\/.
   \end{eqnarray*}
   Similar arguments give
      \begin{eqnarray*}
      \varphi_{2k}(x) &=& \sqrt{2}\sum_{m=0}^\infty \frac{(-1)^m}{(2m+1)!}\left(\frac{1}{\Ai'(a_k)}\int_0^\infty \Ai(u+a_k)u^{2m+1}du\right) x^{2m+1}\/.
   \end{eqnarray*}
\end{proof}
\begin{cor}
\label{zeroes}
Every eigenfunction $\varphi_n$ has a finite number of zeroes.
\end{cor}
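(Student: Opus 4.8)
The plan is to combine the two facts about $\varphi_n$ that are already available: by the preceding theorem each eigenfunction is real-analytic on all of $\R$, and by Theorem \ref{Eigenfunctions:asymptotics} it decays like a fixed negative power of $|z|$ with a non-vanishing leading coefficient. Analyticity will prevent the zeros from accumulating in any bounded interval, while the asymptotics will confine all zeros to a bounded interval; together these give finiteness.

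First I would observe that $\varphi_n$ is not identically zero (it is $L^2$-normalized) and, by the preceding theorem, extends to a real-analytic function on the connected set $\R$. Hence, by the identity theorem for analytic functions, the zero set of $\varphi_n$ has no accumulation point in $\R$, so for every $R>0$ the function $\varphi_n$ has only finitely many zeros in $[-R,R]$.

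Next I would apply Theorem \ref{Eigenfunctions:asymptotics} with $N=2$, which requires knowing that the leading coefficient there does not vanish. The recursion $p_{s+1}=p_s'+xq_s$, $q_{s+1}=p_s+q_s'$ induced by the Airy equation (\ref{AiryEq}) through the definition (\ref{AiDerivatives}) --- the same recursion giving $p_1\equiv 0$ and $q_2\equiv 0$ used in the proof above --- yields $p_3\equiv 1$ and $q_4\equiv 2$. Therefore, as $|z|\to\infty$,
\begin{align*}
\varphi_{2k-1}(z) &= \sqrt{\frac{2}{-a_k'}}\,\frac{1}{z^{4}}+O\!\left(\frac{1}{z^{6}}\right),\\
\varphi_{2k}(z) &= \frac{2\sqrt{2}}{z^{5}}+O\!\left(\frac{1}{z^{7}}\right).
\end{align*}
Since all $a_k'<0$, the leading coefficients are strictly positive, so there is $R_n>0$ with $\varphi_n(z)\neq 0$ for $|z|>R_n$. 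Combining this with the previous paragraph, every zero of $\varphi_n$ lies in $[-R_n,R_n]$, where there are only finitely many of them, and the claim follows.

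I do not expect a genuine obstacle here: both ingredients are already proved, and note that analyticity and $L^2$-membership alone would not suffice (e.g. $\sin x/(1+x^2)$), so the pointwise asymptotics are essential. The one point deserving care is checking that the leading term of the expansion in Theorem \ref{Eigenfunctions:asymptotics} is really nonzero, i.e. that $p_3$ and $q_4$ are nonzero constants; but this is immediate from the defining recursion of $p_n,q_n$, exactly as with $p_1\equiv 0$ and $q_2\equiv 0$.
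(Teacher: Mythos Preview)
Your proof is correct and follows essentially the same approach as the paper's: use the asymptotic expansion from Theorem~\ref{Eigenfunctions:asymptotics} to confine all zeroes to a bounded interval, then invoke analyticity from the preceding theorem to conclude that only finitely many zeroes lie there. Your version is simply more explicit, in that you verify from the recursion that $p_3\equiv 1$ and $q_4\equiv 2$ so the leading terms are genuinely nonzero, whereas the paper leaves this implicit.
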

\begin{proof}
Using the asymptotic expansions in Theorem \ref{Eigenfunctions:asymptotics} it is easily seen that for every $n$ there exists $A_n>0$ such that $\sup_{|x|>A_n}|\varphi_n(x)|>0$. This means that all zeroes of $\varphi_n$ are in $[-A_n,A_n]$. Since the function $\varphi_n$ is analytic, the set of its zeroes is finite.
\end{proof}

\begin{thm}
\label{UB:thm}
   The eigenfunctions $\varphi_n$ are uniformly bounded.
\end{thm}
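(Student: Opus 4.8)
The plan is to read the uniform bound off the explicit integral representations of the eigenfunctions. Recall from \eqref{eigenfunction:odd:integral} and the formula for $\varphi_{2k}$ obtained in the proof of Theorem~\ref{Eigenfunctions:asymptotics} that, with $\lambda_{2k-1}=-a_k'$ and $\lambda_{2k}=-a_k$,
$$
\varphi_{2k-1}(z)=\sqrt{\tfrac{2}{-a_k'}}\,\frac{1}{\Ai(a_k')}\int_0^\infty \Ai(u-\lambda_{2k-1})\cos(zu)\,du,\qquad
\varphi_{2k}(z)=\frac{\sqrt2}{\Ai'(a_k)}\int_0^\infty \Ai(u-\lambda_{2k})\sin(zu)\,du .
$$
Since every eigenvalue satisfies $\lambda_n\geq\lambda_1>1$, it suffices to establish two facts. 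The first is an oscillatory-integral estimate: there is an absolute constant $C$ with $\Bigl|\int_0^\infty \Ai(u-\lambda)\,e^{izu}\,du\Bigr|\leq C\lambda^{1/4}$ for all $z\in\R$ and all $\lambda\geq1$. The second is the classical asymptotics of the Airy zeros and values, $(-a_k')^{1/4}|\Ai(a_k')|\to\pi^{-1/2}$ and $(-a_k)^{-1/4}|\Ai'(a_k)|\to\pi^{-1/2}$ as $k\to\infty$ (see \cite{PS,H}); since $\Ai$ and $\Ai'$ have only simple zeros we have $\Ai(a_k')\neq0$ and $\Ai'(a_k)\neq0$ for every $k$, so both sequences stay bounded away from $0$ and $\infty$. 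Granting these, taking real and imaginary parts in the first fact and using $\lambda_n^{1/4}/\sqrt{\lambda_n}=(-a_k')^{-1/4}$ gives, for all $z\in\R$,
$$
|\varphi_{2k-1}(z)|\leq \frac{\sqrt2\,C}{(-a_k')^{1/4}|\Ai(a_k')|},\qquad
|\varphi_{2k}(z)|\leq \frac{\sqrt2\,C\,(-a_k)^{1/4}}{|\Ai'(a_k)|},
$$
and the right-hand sides are bounded uniformly in $k$ by the second fact; the theorem follows.

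It remains to prove the oscillatory-integral estimate, which is the core of the matter. Fix $\lambda\geq1$ and $z\in\R$ and split the integral at $u=\lambda$. On $[\lambda,\infty)$, the substitution $v=u-\lambda$ and the superexponential decay of $\Ai$ at $+\infty$ bound the contribution by $\int_0^\infty|\Ai(v)|\,dv$, an absolute constant. On $[0,\lambda]$, the substitution $s=\lambda-u$ turns the integral into $e^{iz\lambda}\int_0^\lambda \Ai(-s)e^{-izs}\,ds$, so one is reduced to bounding $\bigl|\int_0^\lambda \Ai(-s)e^{i\zeta s}\,ds\bigr|$ by $C\lambda^{1/4}$ for an arbitrary real $\zeta$. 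The part over $[0,1]$ is at most $\int_0^1|\Ai(-s)|\,ds$. On $[1,\lambda]$ one inserts the asymptotic expansion $\Ai(-s)=\pi^{-1/2}s^{-1/4}\sin\!\bigl(\tfrac23 s^{3/2}+\tfrac\pi4\bigr)+E(s)$, with $|E(s)|\leq c\,s^{-7/4}$ on $[1,\infty)$ (see the Appendix); the $E$-term contributes at most $c\int_1^\infty s^{-7/4}\,ds<\infty$, and writing the sine as a combination of two exponentials reduces the main term to two integrals of the form $\int_1^\lambda s^{-1/4}e^{i\psi_\pm(s)}\,ds$ with $\psi_\pm(s)=\pm\tfrac23 s^{3/2}+\zeta s$.

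The decisive observation is that $\psi_\pm''(s)=\pm\tfrac12 s^{-1/2}$ keeps a constant sign and obeys $|\psi_\pm''(s)|\geq\tfrac12\lambda^{-1/2}$ on $[1,\lambda]$, \emph{with a bound that does not depend on $\zeta$} (equivalently, on $z$). Since moreover the amplitude $s\mapsto s^{-1/4}$ is monotone on $[1,\lambda]$ with supremum and total variation each at most $1$, van der Corput's lemma in its second-derivative form with a monotone amplitude (which follows from the classical bound $\bigl|\int_a^b e^{i\psi}\bigr|\leq c\rho^{-1/2}$ when $|\psi''|\geq\rho$, combined with the second mean value theorem) gives $\bigl|\int_1^\lambda s^{-1/4}e^{i\psi_\pm(s)}\,ds\bigr|\leq c'\bigl(\tfrac12\lambda^{-1/2}\bigr)^{-1/2}=c''\lambda^{1/4}$. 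Adding the pieces and using $\lambda^{1/4}\geq1$ to absorb the $O(1)$ terms yields the estimate. The main obstacle is precisely this step: a crude bound $\bigl|\int_0^\infty \Ai(u-\lambda)e^{izu}\,du\bigr|\leq \int_{-\lambda}^\infty|\Ai(t)|\,dt$ is of order $\lambda^{3/4}$, which would only give $\|\varphi_n\|_\infty=O(\lambda_n^{1/2})$; one genuinely has to exploit the oscillation of $\Ai$, and to do so \emph{uniformly in the frequency $z$}. Peeling off the leading oscillatory factor of $\Ai$ is what makes this possible, since it produces a phase whose second derivative is controlled below by $c\lambda^{-1/2}$ independently of $z$, so that the second-derivative van der Corput bound applies with a $z$-independent constant and delivers exactly the power $\lambda^{1/4}$ needed to offset the $\lambda_n^{-1/4}$ decay of $\sqrt{2/(-a_k')}\,\Ai(a_k')^{-1}$ and of $\Ai'(a_k)^{-1}$.
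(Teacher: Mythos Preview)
Your argument is correct and follows the same overall scaffolding as the paper's proof: both start from the integral representation, split off the region where $\Ai$ is nonnegative, insert the leading asymptotic $\Ai(-s)=\pi^{-1/2}s^{-1/4}\sin(\tfrac23 s^{3/2}+\tfrac\pi4)+O(s^{-7/4})$, and reduce the problem to controlling an oscillatory integral with phase $\pm\tfrac23 s^{3/2}+\zeta s$ and amplitude $s^{-1/4}$, showing it is $O(\lambda^{1/4})$ uniformly in $\zeta$. The difference lies in how that last integral is handled. The paper does not invoke the second-derivative van der Corput lemma; instead it rewrites the integrand with the factor $(u^{1/2}\pm x)$ in the phase derivative and applies a first-derivative/second-mean-value-theorem argument (packaged as Lemma~\ref{UB:lemma}) on intervals where $|u^{1/2}-x|\geq1$, together with a trivial $L^1$ bound on the short interval $[(x-1)^2,(x+1)^2]$ around the stationary point, which contributes $O(x^{1/2})=O(\lambda^{1/4})$. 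This forces a case analysis according to the position of $x$ relative to $\sqrt{-a_1'}$ and $\sqrt{-a_k'}$. Your route, using $|\psi_\pm''(s)|\geq\tfrac12\lambda^{-1/2}$ directly in the second-derivative van der Corput bound, sidesteps that case analysis entirely and gives the same $\lambda^{1/4}$ in one line; the price is appealing to a standard lemma external to the paper, whereas the paper's argument is self-contained via Lemma~\ref{UB:lemma}.
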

\begin{proof}
   We begin with the case of odd eigenfunctions $\varphi_n$, where $n=2k-1$, $k=1,2,\ldots$; the proof for the even eigenfunctions is similar. It suffices to consider $x\geq0$. We have
   \begin{eqnarray*}
      \varphi_n(x) &=& \sqrt{\frac{2}{-a_k'}}\frac{1}{\Ai(a_k')}\int_0^\infty \Ai(u+a_k')\cos zu\, du\/.
   \end{eqnarray*}
   Using the asymptotic formulae (\ref{eigenvalues:asymptotics:odd}), (\ref{Airy:asymp:akprime}) for $-a_k'$ and $\Ai(a_k')$ respectively we get that
   \begin{equation}
       \label{normalization:odd:asymp}
       \frac{1}{\sqrt{-a_k'}\Ai(a_k')} = O(k^{-1/6})
   \end{equation}
   We have
   \begin{eqnarray*}
      \int_0^\infty \Ai(u+a_k')\cos xu du = \int_{-a_1'}^{-a_k'}\Ai(u)\cos x(u+a_k')du +\int_{a_1'}^\infty \Ai(u)\cos x(u-a_k')du\/.
   \end{eqnarray*}
   The Airy function is non-negative on $[a_1',\infty]$ and thus the absolute value of the second integral is uniformly bounded by $\int_{a_1'}^\infty \Ai(u)du<\infty$. The first integral is a sum of two integrals $I_1(x)$ and $I_2(x)$ where
   \begin{eqnarray*}
      I_1(x) &=& \int_{-a_1'}^{-a_k'}\(\Ai(u)-\frac{\sin \(\frac{2}{3}u^{3/2}+\frac{\pi}{4}\)}{u^{1/4}\sqrt{\pi}}\)\cos x(u+a_k')du\\
      I_2(x) &=& \frac{1}{\sqrt{\pi}}\int_{-a_1'}^{-a_k'}\sin \(\frac{2}{3}u^{3/2}+\frac{\pi}{4}\)\frac{\cos x(u+a_k')}{u^{1/4}}du\/.
   \end{eqnarray*}
   Using the asymptotic expansion for the Airy function (\cite{AS} 10.4.60 p.448) we get
   \begin{eqnarray*}
      \Ai(u)-\frac{\sin \(\frac{2}{3}u^{3/2}+\frac{\pi}{4}\)}{u^{1/4}\sqrt{\pi}} = O\left(\frac{1}{u^{7/4}}\right)\/,
   \end{eqnarray*}
   as $u\to \infty$. Thus there is a constant $c_1>0$ such that
   \begin{eqnarray*}
      |I_1(x)|\leq c_1\int_{-a_1'}^\infty u^{-7/4}du\/.
   \end{eqnarray*}
   The integral $2\sqrt{\pi} I_2(x)$ can be rewritten as the sum $I_3(x)+I_4(x)$, where
   \begin{eqnarray*}
       I_3(x) &=& \int_{-a_1'}^{-a_k'} (u^{1/2}+x)\sin\(\frac{2}{3}u^{3/2}+\frac{\pi}{4}+xu+xa_k'\)\frac{du}{(u^{1/2}+x)u^{1/4}}\\
       I_4(x) &=& \int_{-a_1'}^{-a_k'} \sin\(\frac{2}{3}u^{3/2}+\frac{\pi}{4}-xu-xa_k'\)\frac{du}{u^{1/4}}\/.
   \end{eqnarray*}
    The term $I_3$ is uniformly bounded by Lemma \ref{UB:lemma} with
    $$
    f(x,u) = (u^{1/2}+x)\sin\(\frac{2}{3}u^{3/2}+\frac{\pi}{4}+xu+xa_k'\)
    $$
    and
    $$
    g(x,u) = (u^{1/2}+x)^{-1}u^{-1/4}.
    $$
    To deal with the term $I_4$ we need to consider several cases. Let $(x-1)^2>-a_k'$ or $(x+1)^2<-a_1'$ and rewrite $I_4$ in the form
    \begin{equation*}
       I_4(x) = \int_{-a_1'}^{-a_k'} (u^{1/2}-x)\sin\(\frac{2}{3}u^{3/2}+\frac{\pi}{4}-xu-xa_k'\)\frac{du}{u^{1/4}(u^{1/2}-x)}.
    \end{equation*}
An application of Lemma \ref{UB:lemma} with
$$
f(x,u) = (u^{1/2}-x)\sin\(\frac{2}{3}u^{3/2}+\frac{\pi}{4}-xu-xa_k'\)
$$
and
$$
g(x,u) = (u^{1/2}-x)^{-1}u^{-1/4}
$$
implies that $I_4$ is uniformly bounded for $x>\sqrt{-a_k'}+1$ and $k=2,3,\ldots$. For the case $-a_1'< (x-1)^2\leq-a_k'\leq(x+1)^2$ we have
\begin{eqnarray*}
  I_4(x) &=& \int_{-a_1'}^{(x-1)^2} (u^{1/2}-x)\sin\(\frac{2}{3}u^{3/2}+\frac{\pi}{4}-xu-xa_k'\)\frac{du}{u^{1/4}(u^{1/2}-x)}  \\
  &&+ \int_{(x-1)^2}^{-a_k'} \sin\(\frac{2}{3}u^{3/2}+\frac{\pi}{4}-xu-xa_k'\)\frac{du}{u^{1/4}}\/.
\end{eqnarray*}
  The first integral above is uniformly bounded by the same argument as in the previous case. The absolute value of the second integral is bounded by
  \begin{equation*}
     \int_{(x-1)^2}^{-a_k'}\frac{du}{u^{1/4}}\leq \int_{(x-1)^2}^{(x+1)^2}\frac{du}{u^{1/4}} = \frac{3}{4}((x+1)^{3/4}-(x-1)^{3/4})\leq c_1 x^{1/4}\leq c_2 k^{1/6}\/.
  \end{equation*}
  The last inequality follows from the fact that $(x-1)^{2}\leq -a_k'$ and the asymptotic expansion for $a_k'$. In the case $-a_1'< (x-1)^2\leq(x+1)^2<-a_k'$ we split up $I_4$ as
  \begin{eqnarray*}
     I_4(x) &=& \left(\int_{-a_1'}^{(x-1)^2}+\int_{(x+1)^2}^{-a_k'}\right) (u^{1/2}-x)\sin\(\frac{2}{3}u^{3/2}+\frac{\pi}{4}-xu-xa_k'\)\frac{du}{u^{1/4}(u^{1/2}-x)}  \\
  &&+ \int_{(x-1)^2}^{-a_k'} \sin\(\frac{2}{3}u^{3/2}+\frac{\pi}{4}-xu-xa_k'\)\frac{du}{u^{1/4}}\/.
  \end{eqnarray*}
    By Lemma \ref{UB:lemma} the first two integrals are uniformly bounded following by the same argument as before. The last integral is bounded by $c_2k^{1/6}$. Finally, when $(x-1)^2<-a_1'<(x+1)^2<-a_k'$ we have
  \begin{eqnarray*}
     I_4(x) &=& \int_{-a_1'}^{(x+1)^2} \sin\(\frac{2}{3}u^{3/2}+\frac{\pi}{4}-xu-xa_k'\)\frac{du}{u^{1/4}}  \\
     &&+ \int_{(x-1)^2}^{-a_k'} (u^{1/2}-x)\sin\(\frac{2}{3}u^{3/2}+\frac{\pi}{4}-xu-xa_k'\)\frac{du}{u^{1/4}(u^{1/2}-x)}\/.
  \end{eqnarray*}
  The absolute value of the first integral is estimated as
  \begin{eqnarray*}
     \int_{-a_1'}^{(x+1)^2}\frac{du}{u^{1/4}}\leq \int_{-a_1'}^{(\sqrt{-a_1'}+2)^2}\frac{du}{u^{1/4}}.
  \end{eqnarray*}
 The last term can be uniformly bounded by one more application of Lemma \ref{UB:lemma}. Hence we obtain
  \begin{eqnarray*}
    \int_0^\infty \Ai(u+a_k')\cos xu du  = O(k^{1/6})
  \end{eqnarray*}
  uniformly in $x\geq 0$. Together with (\ref{normalization:odd:asymp}) this implies that the functions $\varphi_n$ are uniformly bounded.
\end{proof}

\begin{thm}
\label{shapegs}
The ground state $\varphi_1$ is decreasing on $(0,\infty)$. Moreover, there exist $x_1>x_0>0$ such that $\varphi_1$ is concave on $[-x_0,x_0]$ and is convex on $(-\infty,-x_1]$ and $[x_1,\infty)$.
\end{thm}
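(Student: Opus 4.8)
The plan is to read off all three assertions from the integral representation
$$
\varphi_1(x)=C\int_0^\infty \Ai(u+a_1')\cos(xu)\,du,\qquad C:=\sqrt{\tfrac{2}{-a_1'}}\,\frac{1}{\Ai(a_1')}>0,
$$
which is produced in the course of the proof of Theorem \ref{Eigenfunctions:asymptotics}, together with the eigenequation $|D|\varphi_1+x^2\varphi_1=\lambda_1\varphi_1$, where $|D|:=\sqrt{-d^2/dx^2}$ is the (negative) generator of the Cauchy process. First I would record that $a_1'$, the first zero of $\Ai'$, lies to the right of the first zero $a_1$ of $\Ai$, so that $\Ai(u+a_1')>0$ for all $u\geq0$; since moreover $\Ai(u+a_1')$ decays like $e^{-\frac23u^{3/2}}$, differentiation under the integral is unrestricted. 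The concavity near the origin and the convexity in the tails will come from this representation, the monotonicity from the eigenequation.

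\medskip\noindent
For the concavity near $0$ I would simply compute $\varphi_1''(0)=-C\int_0^\infty u^2\Ai(u+a_1')\,du<0$ from the representation and invoke continuity of $\varphi_1''$ to obtain an interval $[-x_0,x_0]$ of concavity. For the convexity in the tails I would apply the integration-by-parts scheme of the proof of Theorem \ref{Eigenfunctions:asymptotics} directly to $\varphi_1''(z)=-C\int_0^\infty u^2\Ai(u+a_1')\cos(zu)\,du$: because $\Ai'(a_1')=0$, the first two boundary terms vanish and the first non-vanishing odd derivative of $u^2\Ai(u+a_1')$ at $u=0$ is the fifth, equal to $20\,\Ai(a_1')$, so that
$$
\varphi_1''(z)=20\sqrt{\tfrac{2}{-a_1'}}\,z^{-6}+O\!\big(z^{-8}\big),\qquad |z|\to\infty .
$$
This is positive for $|z|$ large, giving an $x_1$ beyond which $\varphi_1$ is convex; shrinking $x_0$ if necessary one may assume $x_0<x_1$. (Running the expansion on $\varphi_1''$ itself avoids having to justify differentiating the asymptotic series of $\varphi_1$.)

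\medskip\noindent
The substantial part is the monotonicity on $(0,\infty)$. Since $|D|$ is a Fourier multiplier it commutes with $d/dx$, so differentiating the eigenequation gives $(|D|+x^2-\lambda_1)\varphi_1'=-2x\varphi_1$. The function $\varphi_1'$ is odd (as $\varphi_1$ is even) and, by the decay estimates of Theorem \ref{Eigenfunctions:asymptotics}, $\varphi_1',\,x\varphi_1\in L^2(\R)$; on the odd subspace $|D|+x^2-\lambda_1$ is bounded below by the spectral gap $\lambda_2-\lambda_1>0$, hence invertible there, and
$$
-\varphi_1'=(|D|+x^2-\lambda_1)^{-1}(2x\varphi_1)=\int_0^\infty e^{\lambda_1 t}\,T_t(2x\varphi_1)\,dt .
$$
The key structural fact I would establish is that $T_t$ preserves the closed cone $\mathcal K=\{f\in L^2(\R):f(-x)=-f(x),\ f\geq0\text{ a.e.\ on }(0,\infty)\}$. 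For bounded even $V\geq0$ this follows from the Trotter product formula used in the proof of Theorem \ref{FK}: multiplication by $e^{-sV}$ preserves $\mathcal K$ trivially, while $e^{-s|D|}$ preserves it because for $f\in\mathcal K$ and $x>0$ one has $(e^{-s|D|}f)(x)=\int_0^\infty\big(p(s,x-y)-p(s,x+y)\big)f(y)\,dy\geq0$, using that the Cauchy kernel $p(s,\cdot)$ is even and strictly decreasing in $|\cdot|$ and $|x-y|\leq x+y$, and oddness is preserved because the kernel is even. The potential $x^2$ is then reached by approximating it with the bounded even potentials $\min(x^2,n)$ and passing to the strong limit of the semigroups, as in Theorem \ref{prop:FKS}. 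Since $2x\varphi_1\in\mathcal K$ (recall $\varphi_1>0$) and $\mathcal K$ is closed, we get $-\varphi_1'\in\mathcal K$, i.e.\ $\varphi_1'\leq0$ on $(0,\infty)$; and as $\varphi_1$ is analytic and non-constant, this yields strict monotonicity of $\varphi_1$ on $(0,\infty)$.

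\medskip\noindent
The main obstacle I anticipate is making the cone argument fully rigorous: one must handle the Trotter/approximation step for the unbounded potential $x^2$ with care (strong convergence of the approximating semigroups, closedness of $\mathcal K$), and justify that $\varphi_1'$ is indeed \emph{the} $L^2$-solution on the odd subspace and that the Bochner integral $\int_0^\infty e^{\lambda_1 t}T_t\,dt$ converges there --- both of which rest on the spectral gap $\lambda_2-\lambda_1>0$ established in Corollary \ref{specgap}. By comparison, the concavity/convexity statements are routine once the integration-by-parts expansion is carried out directly on $\varphi_1''$.
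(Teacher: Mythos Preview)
Your treatment of the concavity near $0$ and the convexity in the tails is essentially identical to the paper's: both compute $\varphi_1''(0)<0$ directly from the integral representation, and both extract the leading term $20\sqrt{2/(-a_1')}\,z^{-6}$ of $\varphi_1''(z)$ by integrating $u^2\Ai(u+a_1')\cos(zu)$ by parts, using $\Ai'(a_1')=0$ to kill the intermediate boundary contribution $f'''(0)=6\Ai'(a_1')$.

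The monotonicity argument, however, takes a genuinely different route. The paper proceeds probabilistically: for $0<x<y$ it realises $X_t^2$ as a one-dimensional squared Bessel process time-changed by the $\tfrac12$-stable subordinator, and invokes the pathwise comparison theorem for SDEs (Revuz--Yor, Chapter~IX) to obtain $\E^x\big[e^{-\int_0^t X_s^2\,ds}\big]\geq\E^y\big[e^{-\int_0^t X_s^2\,ds}\big]$; expanding this expectation in the eigenbasis and sending $t\to\infty$ isolates $\varphi_1$, after which analyticity upgrades non-increasing to strictly decreasing. Your route is operator-theoretic: you differentiate the eigenequation, invert $H-\lambda_1$ on the odd subspace via the spectral gap, and show that the resolvent $\int_0^\infty e^{\lambda_1 t}T_t\,dt$ preserves the cone of odd functions non-negative on $(0,\infty)$, which in turn reduces to the elementary fact that the Cauchy kernel $p(s,\cdot)$ is symmetric and strictly decreasing in $|\cdot|$. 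Your argument stays entirely within the analytic/semigroup framework already set up in Sections~2--3 and avoids importing an external SDE comparison result; the paper's argument is shorter once that comparison is granted, and yields pathwise monotonicity of the full Feynman--Kac functional $x\mapsto\E^x[e^{-\int_0^t X_s^2\,ds}]$ for every $t$, not just of its large-$t$ limit. The technical caveats you flag---domain membership of $\varphi_1'$, convergence of the Bochner integral on the odd sector, and stability of the cone under the approximation $V_n=\min(x^2,n)$---are real but routine given the decay established in Theorem~\ref{Eigenfunctions:asymptotics} and the gap in Corollary~\ref{specgap}.
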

\begin{proof}
   For $0<x<y$, let $\pro {R^{(1)}}$, $\pro {R^{(2)}}$ be two squared-Bessel processes of dimension 1 and
   with index $\nu=-1/2$, such that $R^{(1)}_0=x^2$ and $R^{(1)}_0=y^2$, and let $\pro\eta$ be a $1/2$-stable
   subordinator independent from $R^{(1)}$ and $R^{(2)}$. Using the comparison theorem (see \cite{RY}, Chapter
   IX, Theorem 3.7) we get $R^{(1)}_t\leq R^{(2)}_t$ for all $t\geq 0$ with probability $1$. Thus
   $R^{(1)}_{\eta_t}\leq R^{(2)}_{\eta_t}$, for all $t\geq 0$. However, the process $R^{(1)}_{\eta_t}$ is
   $X_t^2$ starting form $x^2$, and $R^{(1)}_{\eta_t}$ is $X_t^2$ starting form $y^2$. Hence
   \begin{eqnarray*}
       \E^x\left[e^{-\int_0^t X_s^2ds}\right] \geq \E^y\left[e^{-\int_0^t X_s^2ds}\right]\/,\quad 0<x<y\/.
   \end{eqnarray*}
   For every $x\in \R$ we have
   \begin{eqnarray*}
     \E^x\left[e^{-\int_0^t X_s^2ds}\right]
     &=& \int_\R u(t,x,y)dy\\
     &=& \int_\R \sum_{n=1}^\infty e^{-\lambda_n t}\varphi_n(x)\varphi_n(y)dy\\
     &=& \sum_{n=1}^\infty e^{-\lambda_n t} \varphi_n(x)\int_\R \varphi_n(y)dy \/.
   \end{eqnarray*}
   This implies
   \begin{eqnarray*}
      \varphi_1(x) =  \left(\int_\R \varphi_n(y)dy\right)^{-1} \lim_{t\to \infty} e^{\lambda_n t}\,\E^x\left[e^{-\int_0^t X_s^2ds}\right]
   \end{eqnarray*}
   and therefore the ground state is non-increasing on $(0,\infty)$ as a limit of non-increasing functions.
   In fact, $\varphi_1$ is strictly decreasing on $(0,\infty)$. This easily follows from the monotonicity of
   $\varphi_1$ proven above, and the fact that $\varphi_1$ is analytic on $\R$.

   Concavity follows from the expression
   \begin{equation*}
      \varphi_1''(0) = -\sqrt{\frac{2}{-a_1'}}\frac{1}{\Ai(a_1')}\int_0^\infty \Ai(u+a_1')u^2du\/.
   \end{equation*}
   We have $\Ai(u+a_1')u^2>0$ on $(0,\infty)$ thus $\varphi_1''(0)<0$. By continuity of $\varphi_1''$ at $0$ it follows that there exists $x_0>0$ such that $\varphi_1''(x)<0$ for all $x\in [-x_0,x_0]$. Moreover, we have
   \begin{eqnarray*}
      \varphi_1(x) &=&      -\sqrt{\frac{2}{-a_1'}}\frac{1}{\Ai(a_1')}\int_0^\infty \Ai(u+a_1')u^2\cos xu du \/.
   \end{eqnarray*}
   Putting $f(u) = \Ai(u+a_1')u^2$ and integrating by parts we obtain
   \begin{eqnarray*}
     \frac{1}{\Ai(a_1')}\int_0^\infty \Ai(u+a_1')u^2\cos xu du = -\frac{f^{(5)}(0)}{x^6}+\frac{1}{x^7}\int_0^\infty f^{(7)}(u)\sin xu du\/,
   \end{eqnarray*}
   where $f^{(5)}(0) = 20\Ai(a_1')$ and $f^{(7)}(u) = P(u)\Ai(u+a_1')+Q(u)\Ai'(u+a_1')$, where $P(u)$ and $Q(u)$ are polynomials. Applying (\ref{Airy:asymp:infty}) and (\ref{AiryPrime:asymp:infty}) we get
   \begin{equation*}
      \lim_{|x|\to\infty}x^6\varphi''_1(x)=20\sqrt{\frac{2}{-a_1}}>0
   \end{equation*}
    and this implies that there exists $x_1>x_0$ such that $\varphi''_1$ is positive on $(-\infty,-x_1]$ and $[x_1,\infty)$.
   This completes the proof.
\end{proof}

\begin{thm}
   The integral kernel $u(t,x,y)$ is jointly continuous on $(0,\infty)\times\R\times \R$. Moreover, there exists a constant $c>1$ such that
   \begin{eqnarray}
   \label{heatkernel:estimates:1}
      \frac{1}{c} \frac{e^{a_1't}}{(1+x^4)(1+y^4)}\leq u(t,x,y)\leq c\frac{e^{a_1't}}{(1+x^4)(1+y^4)}\/,
   \end{eqnarray}
   for every $t>1$ and $x,y\in \R$.
\end{thm}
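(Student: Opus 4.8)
The plan is to combine the eigenfunction expansion $u(t,x,y)=\sum_{n\ge 1}e^{-\lambda_n t}\varphi_n(x)\varphi_n(y)$ (already used in the proof of Theorem \ref{shapegs}) with the semigroup identity $T_t=T_\delta\,T_{t-2\delta}\,T_\delta$, the intrinsic ultracontractivity of Lemma \ref{uprop}, and a short estimate exploiting the jumps of the Cauchy process. Two reductions are made at the outset. First, since $\lambda_1=-a_1'$ one has $e^{-\lambda_1 t}=e^{a_1't}$; and since $\varphi_1$ is analytic, strictly positive, and satisfies the two-sided bound $\varphi_1(x)\asymp|x|^{-4}$ of \cite{KK} (the upper half is \eqref{Eigenfunction_estimate}), continuity and positivity on $[-1,1]$ give $c^{-1}(1+x^4)^{-1}\le\varphi_1(x)\le c\,(1+x^4)^{-1}$ for all $x$; hence it suffices to prove that $u(t,x,y)$ is comparable to $e^{-\lambda_1 t}\varphi_1(x)\varphi_1(y)$, with absolute constants, for all $t>1$. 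Second, joint continuity of $u$ on $(0,\infty)\times\R\times\R$ follows from the expansion: by Theorem \ref{UB:thm} there is $M$ with $\sup_n\|\varphi_n\|_\infty\le M$, and by Theorem \ref{trestim} (or Corollary \ref{Eigenvalues:asymp}) $\sum_n e^{-\lambda_n t_0}<\infty$ for every $t_0>0$, so the series converges uniformly on $[t_0,\infty)\times\R\times\R$, where it is a limit of continuous functions.

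Now fix $\delta=\tfrac14$, so that $t-2\delta>\tfrac12$ whenever $t>1$, and write in kernel form
$$u(t,x,y)=\int_\R\int_\R u(\delta,x,z)\,u(t-2\delta,z,w)\,u(\delta,w,y)\,dz\,dw .$$
For the \emph{upper bound} I would apply Lemma \ref{uprop}(2) to the outer kernels, $u(\delta,x,z)\le C(\delta)\varphi_1(x)\varphi_1(z)$ and likewise for $u(\delta,w,y)$; since $\int_\R\int_\R\varphi_1(z)\,u(t-2\delta,z,w)\,\varphi_1(w)\,dz\,dw=(\varphi_1,T_{t-2\delta}\varphi_1)=e^{-\lambda_1(t-2\delta)}$, this yields $u(t,x,y)\le C(\delta)^2e^{\lambda_1/2}\,e^{-\lambda_1 t}\varphi_1(x)\varphi_1(y)$. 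For the \emph{lower bound} I would restrict the double integral to $z,w\in[-R,R]$ for a large fixed $R$, obtaining
$$u(t,x,y) \ge \Big(\inf_{|z|,|w|\le R}u(t-2\delta,z,w)\Big)\Big(\int_{-R}^{R}u(\delta,x,z)\,dz\Big)\Big(\int_{-R}^{R}u(\delta,w,y)\,dw\Big).$$
The middle factor is at least $c_R^{-1}e^{-\lambda_1 t}$: for $t-2\delta$ in a bounded range this is joint continuity plus strict positivity of $u$ on a compact set, and for $t-2\delta$ large one uses $u(s,z,w)=e^{-\lambda_1 s}\varphi_1(z)\varphi_1(w)+r(s,z,w)$ with $|r(s,z,w)|\le M^2\sum_{n\ge2}e^{-\lambda_n s}$, which is $\le\tfrac12 e^{-\lambda_1 s}\min_{[-R,R]^2}\varphi_1\varphi_1$ once $s$ exceeds a threshold (since $\sum_{n\ge2}e^{-(\lambda_n-\lambda_1)s}\to0$). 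Each outer factor is $\int_{-R}^{R}u(\delta,x,z)\,dz=\ex^x\big[e^{-\int_0^\delta X_r^2\,dr};X_\delta\in[-R,R]\big]$; I would bound this below by the contribution of paths that stay within distance $1$ of $x$ until a jump at some time $s\in(0,\delta/2)$ carries them into $[-R/2,R/2]$, after which they remain in $[-R,R]$ up to time $\delta$. On this event $e^{-\int_0^\delta X_r^2\,dr}\ge e^{-s(|x|+1)^2-\delta R^2}$, and the $\nu$-rate of such a jump from a point near $x$ is of order $(1+x^2)^{-1}$ by the heavy tail of $\nu(dz)=\tfrac1\pi z^{-2}\,dz$, so
$$\int_{-R}^{R}u(\delta,x,z)\,dz \gtrsim \frac{1}{1+x^2}\int_0^{\delta/2}e^{-s(|x|+1)^2}\,ds \gtrsim \frac{1}{1+x^4}$$
uniformly in $x$ (the $s$-integral supplies the second power of $(1+x^2)^{-1}$ for large $|x|$, and the region $|x|\le R+1$ is covered again by continuity and positivity on a compact set). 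Multiplying the three lower bounds gives $u(t,x,y)\ge C^{-1}e^{-\lambda_1 t}\varphi_1(x)\varphi_1(y)$, as required.

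The hard part will be the uniform-in-$x$ lower bound on $\int_{-R}^{R}u(\delta,x,z)\,dz$: one must make the single-large-jump estimate rigorous via the L\'evy--It\^o decomposition of the Cauchy process (isolating jumps of modulus $>1$ as a compound Poisson part), control $\int_0^\delta X_r^2\,dr$ on the relevant paths, and verify that the argument -- together with the ``compact middle'' part handled by continuity -- is valid all the way down to $t>1$ (for which $\delta=\tfrac14$ leaves $t-2\delta>\tfrac12$, safely away from $0$). Everything else -- evaluating $(\varphi_1,T_{t-2\delta}\varphi_1)$, the elementary comparisons with $(1+x^4)^{-1}$, and the compactness arguments -- is routine.
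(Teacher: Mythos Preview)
Your argument is correct and takes a genuinely different route from the paper. The paper does \emph{not} use the factorisation $T_t=T_\delta T_{t-2\delta}T_\delta$; instead it derives, from the explicit Airy representation of $\widehat{\varphi_n}$, quantitative ratio bounds $|\varphi_n(x)/\varphi_1(x)|\le c\,n^{11/6}$ uniformly in $x$, and then controls the full eigenfunction expansion $u-e^{-\lambda_1 t}\varphi_1\otimes\varphi_1$ term by term. For large $t$ this gives both bounds at once; for $t\in[1,t_0]$ the paper computes the limits of $u(t,x,y)/(\varphi_1(x)\varphi_1(y))$ as $|x|,|y|\to\infty$ via Theorem~\ref{Eigenfunctions:asymptotics} and closes with a compactness argument. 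Your approach avoids all the Airy-specific estimates: the upper bound comes for free from IUC plus $(\varphi_1,T_{t-2\delta}\varphi_1)=e^{-\lambda_1(t-2\delta)}$, and the lower bound combines a uniform-in-$t$ lower estimate for the middle kernel on a compact box with a single-large-jump estimate for the side integrals $\int_{-R}^R u(\delta,x,z)\,dz$. This is more robust---it would work for any confining potential with the same $\varphi_1$-asymptotics and IUC---whereas the paper's method yields, as a by-product, the explicit polynomial growth of $\varphi_n/\varphi_1$.

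One simplification you may have overlooked: since IUC also gives the \emph{lower} bound $u(\delta,x,z)\ge c(\delta)\varphi_1(x)\varphi_1(z)$ (the paper itself invokes this near the end of its proof), you can bound the side factor directly by $\int_{-R}^R u(\delta,x,z)\,dz\ge c(\delta)\varphi_1(x)\int_{-R}^R\varphi_1$, bypassing the L\'evy--It\^o jump analysis entirely. With that shortcut your ``hard part'' disappears and the whole proof becomes a few lines.
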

\begin{proof}
  For every $t_0>0$, using the uniform boundedness of the eigenfunctions $\varphi_n$ given in Theorem \ref{UB:thm}, we have
  \begin{eqnarray*}
     |u(t,x,y)|  \leq \sum_{n=1}^\infty e^{-\lambda_n t}|\varphi_n(x)||\varphi_n(y)|\leq  M \sum_{n=1}^\infty e^{-\lambda_n t}\leq M \sum_{n=1}^\infty e^{-\lambda_n t_0}<\infty\/,\quad x,y\in\R\/,\, t>t_0\/,
  \end{eqnarray*}
  for some constant $M>0$. The boundedness of the last series easily follows from the asymptotic expansions for $\lambda_n$ given in Corollary \ref{Eigenvalues:asymp}. Then the continuity of $u(t,x,y)$ follows from the continuity of $\varphi_n$.

  For $k=1,2,\ldots$ we have
   \begin{eqnarray*}
      \frac{\sqrt{-a_k'}}{\sqrt{2}}\varphi_{2k-1}(x)
      &=&
      \frac{1}{x^4}\left(1+\frac{1}{\Ai(a_k')}\int_0^\infty \Ai^{(4)}(u+a_k')\cos xu du\right) \/,
   \end{eqnarray*}
   where $\Ai^{(4)}(t)= t^2 \Ai(t)+2\Ai'(t)$. From the asymptotic expansions (\cite{AS} 10.4.60 and 10.4.62) one can easily get that
   \begin{equation*}
     |\Ai^{(4)}(t)|\leq c_1 t^2\/,
   \end{equation*}
   with some constant $c_1$. Thus, using (\ref{Airy:asymp:infty}) and (\ref{AiryPrime:asymp:infty}), we get
   \begin{eqnarray*}
      \left|\int_0^\infty \Ai^{(4)}(u+a_k')\cos xu du\right| &\leq& \int_{a_k'}^0|\Ai^{(4)}(t)|dt+\int_0^\infty \Ai^{(4)}(t)dt
      \leq c_1  \int_{a_k'}^0t^2dt+c_2
      \leq c_3 (-a_k')^3
   \end{eqnarray*}
   Similarly, we have
   \begin{eqnarray*}
      \frac{\Ai'(a_k)}{\sqrt{2}}\varphi_{2k}(x) = \frac{1}{x^4}\int_0^\infty \Ai^{(4)}(u+a_k)\sin xu du\/,
   \end{eqnarray*}
   where
   \begin{eqnarray*}
     \left|\int_0^\infty \Ai^{(4)}(u+a_k)\sin xu du\right| &\leq&c_3(-a_k)^3
   \end{eqnarray*}
   Using the asymptotic formulae (\ref{Airy:asymp:akprime}) and (\ref{Airy:asymp:ak})
   we obtain that $|x^4\varphi_{2k-1}(x)|\leq c_4 (k-3/4)^{11/6}$ and  $|x^4\varphi_{2k}(x)| \leq c_4 (k-1/2)^{11/6}$.
   This yields, combined with the asymptotic expansion for $\varphi_1$, the estimates
      \begin{eqnarray*}
      \left|\frac{\varphi_{2k-1}(x)}{\varphi_1(x)}\right| \leq c_5 (k-3/4)^{11/6}\/\quad \mbox{and} \quad
      \left|\frac{\varphi_{2k}(x)}{\varphi_1(x)}\right| \leq c_5 (k-1/2)^{11/6}\/.
   \end{eqnarray*}
   for the ratio of the eigenfunctions, for every $x\in\R$. We have
   \begin{eqnarray*}
    |u(t,x,y)-e^{-\lambda_1 t}\varphi_1(x)\varphi_1(y)|\leq e^{-\lambda_1 t}\varphi_1(x)\varphi_1(y)e^{-(\lambda_2-\lambda_1)t}\sum_{n=2}^\infty e^{-(\lambda_n-\lambda_2)t}\left|\frac{\varphi_n(x)}{\varphi_1(x)}\right|\left|\frac{\varphi_n(y)}{\varphi_1(y)}\right|\/.
   \end{eqnarray*}
   For every $t>0$ the series on the right hand side is uniformly bounded by
   \begin{eqnarray*}
     \sum_{n=2}^\infty e^{-(\lambda_n-\lambda_2)t}\left|\frac{\varphi_n(x)}{\varphi_1(x)}\right|\left|\frac{\varphi_n(y)}{\varphi_1(y)}\right|
      &\leq& c_5\left(\sum_{k=2}^\infty \exp(-c_6(k-3/4)^{2/3})\left(k-\frac{3}{4}\right)^{\frac{11}{3}}\right.\\
      &&\left. \qquad +\sum_{k=1}^\infty \exp(-c_7(k-1/4)^{2/3})\left(k-\frac{1}{4}\right)^{\frac{11}{3}}\right)\/.
   \end{eqnarray*}
   Using the integral test for convergence it is easy to see that the above series converge. Thus the expression
   \begin{equation*}
     e^{-(\lambda_2-\lambda_1)t}\sum_{n=2}^\infty e^{-(\lambda_n-\lambda_2)t}\left|\frac{\varphi_n(x)}{\varphi_1(x)}\right|\left|\frac{\varphi_n(y)}{\varphi_1(y)}\right|
   \end{equation*}
   tends to zero uniformly in $x, y\in\R$ as $t\to\infty$. This proves the estimates (\ref{heatkernel:estimates:1}) for $t>t_0$ and $x\in\R$, $y\in\R$ for some $t_0>1$.

   Now let $t\in[1,t_0]$. Then we have
   \begin{eqnarray*}
      |u(t,x,y)-e^{-\lambda_1 t}\varphi_1(x)\varphi_1(y)|\leq c_8 e^{-\lambda_1 t}\varphi_1(x)\varphi_1(y)
   \end{eqnarray*}
    with a constant $c_8>0$. This provides an upper bound for $u(t,x,y)$, for all $x,y\in\R$. Using the above estimates
    for the ratio $\varphi_n(x)/\varphi_1(x)$, dominated convergence and the asymptotic expressions for the eigenfunctions
    in Theorem \ref{Eigenfunctions:asymptotics} we obtain that for every $t\in[1,t_0]$
   \begin{eqnarray*}
       \lim_{|x|,|y|\to\infty} \frac{u(t,x,y)}{\varphi_1(x)\varphi_1(y)} = -a_1'\sum_{k=1}^\infty \frac{\exp(a_k' t)}{-a_k'}\geq -a_1'\sum_{k=1}^\infty \frac{\exp(a_k' t_0)}{-a_k'}>0\/.
   \end{eqnarray*}
   The function $e^{-\lambda_1 t}$ is comparable with a constant on $[1,t_0]$, whence
   \begin{eqnarray*}
      u(t,x,y)\geq c e^{-\lambda_1 t}\varphi_1(x)\varphi_1(y)
   \end{eqnarray*}
   for $t\in[1,t_0]$, $|x|>x_0$ and $|y|>y_0$ for suitable $x_0,y_0>0$.

   Notice that for every $a\in[-y_0,y_0]$ and $s\in[1,t_0]$
   \begin{eqnarray*}
     \lim_{|x|,\to \infty, y\to a, t\to s} \frac{u(t,x,y)}{\varphi_1(x)\varphi_1(y)} = \sqrt{-a_1'}\sum_{k=1}^\infty \frac{e^{a_k's}}{\sqrt{-a_k'}}\frac{\varphi_{2k-1}(a)}{\varphi_1(a)}>0\/.
   \end{eqnarray*}
   Positivity of the limit is a consequence of the well-known general estimate $u(t,x,y)\geq c_t
   \varphi_1(x)\varphi_1(y)$ derived from intrinsic ultracontractivity, with a constant $c_t>0$.
   Thus there exist $\varepsilon_s,\varepsilon_a>0$, $x_{s,a}>0$ and a constant $c_{s,a}>1$
   such that
   \begin{eqnarray*}
      \frac{1}{c_{s,a}}\leq \frac{u(t,x,y)}{\varphi_1(x)\varphi_1(y)}\leq c_{s,a}
   \end{eqnarray*}
   for every $(t,|x|,y)\in (s-\varepsilon_s, s+\varepsilon_s)\times(x_{s,a},\infty)\times(a-\varepsilon_a, a+\varepsilon_a)$. The family
   \begin{equation*}
      \left\{(s-\varepsilon_s, s+\varepsilon_s)\times(a-\varepsilon_a, a+\varepsilon_a)\right\}_{(s,a)\in[1,t_0]\times[-y_0,y_0]}
    \end{equation*}
    is an open cover of the compact set $[1,t_0]\times[-y_0,y_0]$. Therefore there exists a finite subcover
    \begin{equation*}
      \left\{(s_k-\varepsilon_{s_k}, s_k+\varepsilon_{s_k})\times(a_k-\varepsilon_{a_k}, a_k+\varepsilon_{a_k})\right\}_{k=1,2,\ldots,n}
    \end{equation*}
    of the set $[1,t_0]\times[-y_0,y_0]$. Putting $c=\max\{c_{s_k,a_k}:k=1,\ldots,n\}$, $x_1=\max\{x_{s_k,a_k}:k=1,\ldots,n\}$ we get that
   \begin{eqnarray*}
      \frac{1}{c}\leq \frac{u(t,x,y)}{\varphi_1(x)\varphi_1(y)}\leq c
   \end{eqnarray*}
   for every $(t,|x|,y)\in [1,t_0]\times(x_1,\infty)\times[-y_0,y_0]$. Due to the symmetry of $u(t,x,y)$ we get the analogous result for $(t,x,|y|)\in[1,t_0]\times[-x_0,x_0]\times[y_1,\infty)$
   Since $u$ and $\varphi_1$ are continuous and strictly positive we get
   \begin{eqnarray*}
      u(t,x,y)\geq c e^{-\lambda_1 t}\varphi_1(x)\varphi_1(y)
   \end{eqnarray*}
   for $(t,x,y)\in[1,t_0]\times[-\max\{x_0,x_1\},\max\{x_0,x_1\}]\times[-\max\{y_0,y_1\},\max\{y_0,y_1\}]$. This completes the proof.
\end{proof}

\section{Appendix: Airy functions}
For the convenience of the reader we summarize some basic properties of Airy functions used in this paper.

The Airy functions $\Ai(x)$ and $\Bi(x)$ are defined as two independent solutions of the Airy equation
\begin{equation}
\label{AiryEq}
y''-xy = 0\/,\quad x\in\R\/.
\end{equation}
The equation can be easily reduced to the Bessel equation (for $x\leq 0$) and to the modified Bessel equation
($x>0$). This allows to express the Airy functions in terms of Bessel functions $J_\vartheta$ and modified Bessel
functions $K_\vartheta$, $I_\vartheta$ in the following way:
\begin{eqnarray}
\Ai(x)
&=&
\left\{
  \begin{array}{cc}
     \dfrac{\sqrt{-x}}{3}\[J_{1/3}\(\dfrac23 (-x)^{3/2}\)+J_{-1/3}\(\dfrac23 (-x)^{3/2}\)\]\/, &x\leq0\/,\\
     \dfrac{1}{\pi} \sqrt{\dfrac{x}{3}}K_{1/3}\(\dfrac23 x^{3/2}\)\/,& x>0\/.
  \end{array}
  \right.
\end{eqnarray}
\begin{eqnarray}
\label{BiDefn}
\Bi(x)
&=& \left\{
  \begin{array}{cc}
     \sqrt{\dfrac{-x}{3}}\[J_{-1/3}\(\dfrac23 (-x)^{3/2}\)-J_{1/3}\(\dfrac23 (-x)^{3/2}\)\]\/, &x\leq0\/,\\
     \sqrt{\dfrac{x}{3}} \[I_{1/3}\(\dfrac23 x^{3/2}\)+I_{-1/3}\(\dfrac23 x^{3/2}\) \]\/,& x>0\/.
  \end{array}
  \right.
\end{eqnarray}
Using the relation $\Ai''(x) = x\Ai(x)$ we get that the $n$-th derivative of $\Ai$ is given by
\begin{equation}
\label{AiDerivatives}
   \Ai^{(n)}(x) = p_n(x)\Ai(x) + q_n(x)\Ai'(x)\/,
\end{equation}
where $p_n$ and $q_n$ are $n$th order polynomials defined by the recursive relations
\begin{eqnarray*}
   p_{n+1}(x) &=& p_n'(x)+xq_n(x)\/,\\
   q_{n+1}(x) &=& p_n(x) + q_n'(x)
\end{eqnarray*}
and $p_0(x)\equiv 1$, $q_0(x)\equiv 0$. Below we give formulae for $p_n$ and $q_n$ for $n=1,\ldots,10$.
\begin{center}
 $$  \begin{array}{rclcrcl}
      p_1(x)&=&0            &&q_1(x)&=&1\\
      p_2(x)&=&x            &&q_2(x)&=&0\\
      p_3(x)&=&1            &&q_3(x)&=&x\\
      p_4(x)&=&x^2          &&q_4(x)&=&2\\
      p_5(x)&=&4x           &&q_5(x)&=&x^2\\
      p_6(x)&=&x^3+4        &&q_6(x)&=&6x\\
      p_7(x)&=&9x^2         &&q_7(x)&=&x^3+10\\
      p_8(x)&=&x^4+28x      &&q_8(x)&=&12x^2\\
      p_9(x)&=&16x^3+28     &&q_9(x)&=&x^4+52x\\
      p_{10}(x)&=&x^5+100x^2&&q_{10}(x)&=&20x^3+80\\
      \end{array}$$
\end{center}

We recall some asymptotic results related to Airy functions. The asymptotic behaviour of $\Ai$ for large
arguments is given by (\cite{AS},10.4.59 and 10.4.60)
\begin{eqnarray}
   \label{Airy:asymp:infty}
   \Ai(x) &\cong& \frac{1}{2\pi^{1/2}}x^{-1/4}e^{-\frac{2}{3}x^{3/2}}\/,\quad x\to\infty\/,\\
   \label{Airy:asymp:minusinfty}
   \Ai(-x)&=& \frac{1}{\pi^{1/2}}\frac{\sin\left(\frac{2}{3}x^{3/2}+\pi/4\right)}{x^{1/4}}+O(x^{-7/4})\/,\quad x\to \infty\/.
\end{eqnarray}
The corresponding formula for $\Ai'$ is given by (\cite{AS}, 10.4.61)
\begin{eqnarray}
 \label{AiryPrime:asymp:infty}
 \Ai'(x) &\cong& -\frac{1}{2\pi^{1/2}}x^{1/4}e^{-\frac{2}{3}x^{3/2}}\/,\quad x\to\infty\/.
\end{eqnarray}
Asymptotic formulae for $\Ai(a_k')$ and $\Ai'(a_k)$ are (\cite{AS} 10.4.96 and 10.4.97)
\begin{eqnarray}
   \label{Airy:asymp:akprime}
   \Ai(a_k') &\sim& (-1)^{k-1} \pi^{-1/2}\left(\frac{3\pi}{2}\right)^{-1/6} (k-3/4)^{-1/6}\/,\quad k\to \infty\/,\\
   \label{Airy:asymp:ak}
   \Ai'(a_k) &\sim& (-1)^{k-1}\pi^{-1/2}\left(\frac{3\pi}{2}\right)^{1/6} (k-1/2)^{1/6}\/,\quad k\to \infty\/.
\end{eqnarray}

Finally we prove a result we have used in the previous section.
\begin{lem}
\label{UB:lemma}
  For any open set $D\subset\R$ and $a>0$ let $u\mapsto f(x,u)$ be a continuous function for every fixed $x\in D$
  such that $\int_a^y f(x,u)du$ is uniformly bounded for $(x,y)\in D\times [a,\infty)$. Assume that $(x,u)\mapsto
  g(x,u)$ is a uniformly bounded function on $D\times [a,\infty)$ such that for every $x\in D$ and $y>a$ the
  function $u\rightarrow g(x,u)$ has continuous derivative on $[a,y)$, and its derivative has at most one zero in
  $[a,y)$. Then
  \begin{equation*}
     F(x,y) = \int_a^y f(x,u)g(x,u)du
  \end{equation*}
  is a uniformly bounded function in $D\times [a,\infty)$.
\end{lem}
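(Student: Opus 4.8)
The plan is to reduce the claim to the elementary principle that an integrand with uniformly bounded primitive, tested against a function of small total variation, stays uniformly bounded; concretely, this is an integration by parts (equivalently, Bonnet's form of the second mean value theorem for integrals).

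First I would fix $x\in D$ and introduce the primitive $G(x,u)=\int_a^u f(x,s)\,ds$ for $u\ge a$. Since $f(x,\cdot)$ is continuous, $G(x,\cdot)$ is $C^1$ with $\partial_u G(x,u)=f(x,u)$ and $G(x,a)=0$; by hypothesis $M_1:=\sup_{x\in D,\,u\ge a}|G(x,u)|<\infty$, and we also set $M_2:=\sup_{x\in D,\,u\ge a}|g(x,u)|<\infty$. The decisive observation concerns the total variation of $g(x,\cdot)$ on $[a,y]$: since $\partial_u g(x,\cdot)$ has at most one zero $u_0\in[a,y)$, the function $g(x,\cdot)$ is monotone on each of $[a,u_0]$ and $[u_0,y)$ (or on all of $[a,y)$ when there is no zero), hence it has a limit at $y$, extends continuously to $[a,y]$, and
$$\int_a^y|\partial_u g(x,u)|\,du=|g(x,u_0)-g(x,a)|+|g(x,y)-g(x,u_0)|\le 4M_2$$
(and $\le 2M_2$ in the zero-free case).

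Next I would integrate by parts on $[a,y-\varepsilon]$, where $G(x,\cdot)$ and $g(x,\cdot)$ are both $C^1$:
$$\int_a^{y-\varepsilon}f(x,u)g(x,u)\,du=G(x,y-\varepsilon)g(x,y-\varepsilon)-\int_a^{y-\varepsilon}G(x,u)\,\partial_u g(x,u)\,du,$$
the boundary term at $a$ vanishing because $G(x,a)=0$. By the estimate above, $G(x,\cdot)\,\partial_u g(x,\cdot)$ is absolutely integrable on $[a,y]$ with $L^1$-norm at most $4M_1M_2$, so letting $\varepsilon\downarrow 0$ (dominated convergence with dominating function $M_1|\partial_u g(x,\cdot)|$ in the integral term, continuity of $G(x,\cdot)$ and of the continuous extension of $g(x,\cdot)$ in the boundary term) gives
$$F(x,y)=G(x,y)g(x,y)-\int_a^y G(x,u)\,\partial_u g(x,u)\,du.$$
Hence $|F(x,y)|\le |G(x,y)|\,|g(x,y)|+\int_a^y|G(x,u)|\,|\partial_u g(x,u)|\,du\le M_1M_2+4M_1M_2=5M_1M_2$, uniformly in $(x,y)\in D\times[a,\infty)$, which is the assertion.

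I do not anticipate a substantial obstacle; the only point needing care is the endpoint $u=y$, where $\partial_u g(x,\cdot)$ is only assumed continuous on $[a,y)$, and this is handled by the monotonicity-near-$y$ remark above. Alternatively one can avoid limits entirely by splitting $[a,y]$ at $u_0$ into at most two intervals on which $g(x,\cdot)$ is monotone and applying Bonnet's mean value theorem on each; this expresses $F(x,y)$ as a combination of boundedly many values of $g(x,\cdot)$, each multiplied by a difference $G(x,d)-G(x,c)$ with $a\le c\le d\le y$, every summand being controlled by $2M_1M_2$, which again yields the uniform bound.
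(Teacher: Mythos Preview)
Your proof is correct. Your primary argument---integration by parts against the primitive $G(x,\cdot)$ together with the total-variation bound $\int_a^y|\partial_u g|\,du\le 4M_2$ coming from the piecewise monotonicity of $g(x,\cdot)$---is a close cousin of the paper's argument, which instead splits $[a,y]$ at the (at most one) critical point of $g(x,\cdot)$ and applies the second mean value theorem for integrals on each monotone piece. In fact, the alternative you sketch at the end (splitting at $u_0$ and applying Bonnet's theorem) is exactly the paper's proof. Your version has the minor advantage of producing an explicit uniform constant $5M_1M_2$; the paper's version avoids any discussion of the endpoint $u=y$ and of integrability of $\partial_u g$. Either way, the content is the same: bounded primitive of $f$ plus bounded variation of $g$.
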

\begin{proof}
   Denote by $b$ the zero of the function $g(x,\cdot)$ in the interval $[a,y)$. If there are no zeroes of $g(x,\cdot)$
   in this interval $b$ can be arbitrarily chosen in the interval. Fix $x\in D$. Applying the second mean value theorem
   for integration to the intervals $[a,b]$ and $[b,y]$ apart it follows that there exist constants $\lambda_1\in[a,b]$
   and $\lambda_2\in[b,y]$ such that $F(x,y)$ is equal to
   \begin{eqnarray*}
     g(x,a)\int_a^{\lambda_1}f(x,u)du+g(x,b)\int_{\lambda_1}^bf(x,u)du+g(x,b)\int_b^{\lambda_2}f(x,u)du+
     g(x,y)\int_{\lambda_2}^yf(x,u)du\/.
   \end{eqnarray*}
   Using the assumption that the functions $g(x,u)$ and $\int_a^y f(x,u)du$ are uniformly bounded, the result follows.
\end{proof}
%The graph]s of the eigenfunctions.
%\begin{figure}[h!]
%\begin{center}
%\begin{tabular}{ccc}
%\includegraphics[width=5.0cm]{pierwsza.jpg}&&\includegraphics[width=5.0cm]{druga.jpg}\\
%$\phi_1(x)$&&$\phi_2(x)$\\
%$\,$&&$\,$\\
%\includegraphics[width=5.0cm]{trzecia.jpg}&&\includegraphics[width=5.0cm]{czwarta.jpg}\\
%$\phi_3(x)$&&$\phi_4(x)$\\
%$\,$&&$\,$\\
%\includegraphics[width=5.0cm]{piata.jpg}&&\includegraphics[width=5.0cm]{szosta.jpg}\\
%$\phi_5(x)$&&$\phi_6(x)$\\
%$\,$&&$\,$\
%\end{tabular}
%\end{center}
%\end{figure}

\bigskip
\noindent
\textbf{Acknowledgments:} It is a pleasure to thank T. Byczkowski, T. Kulczycki and A. Strohmaier for
discussions.


\begin{thebibliography}{99}
\bibliographystyle{plain}
\bibitem{AS}
M. Abramowitz and I.A. Stegun: \emph{Handbook of Mathematical Functions with Formulas, Graphs, and
Mathematical Tables}, Dover Publications, 1972

\bibitem{Bog}
K. Bogdan et al: \emph{Potential Theory of Stable Processes and its Extensions}, LNM \textbf{1980},
Springer, 2009

\bibitem{B}
N.H. Bingham: Maxima of sums of random variables and suprema of stable processes,
\emph{Z. Wahrscheinlichkeitstheorie verw. Geb.} \textbf{26}, 273-296 (1973)

\bibitem{D}
D.A. Darling: The maximum of sums of stable random variables, \emph{Trans. AMS} \textbf{83} (1956), 164-169

\bibitem{DX}
J. Dong and M. Xu: Some solutions to the space fractional Schr\"odinger equation using momentum representation
method, \emph{J. Math. Phys.} \textbf{48}, 072105 (2007)

\bibitem{H}
H. W. Heathcote: Bounds for zeros of some special functions, \emph{Proc. AMS} \textbf{25} (1970), 72-74

\bibitem{HIL}
F. Hiroshima, T. Ichinose and J. L\"{o}rinczi: Path integral representation for Schr\"odinger
operators with Bernstein functions of the Laplacian, \emph{arXiv 0906.0103}, 2009

\bibitem{J}
N. Jacob: \emph{Pseudo-Differential Operators and Markov Processes: Markov Processes and Applications},
vols. 1-3, Imperial College Press, 2003-2005

\bibitem{KK}
K. Kaleta and T. Kulczycki: Intrinsic ultracontractivity for Schr\"odinger operators based on fractional Laplacian,
preprint, 2009

\bibitem{KL}
K. Kaleta and J. L\"orinczi: Analytic properties of fractional Schr\"odinger semigroups and Gibbs measures for
symmetric stable processes, preprint, 2010

\bibitem{KKMS}
M. Kwasnicki, T. Kulczycki, J. Ma{\l}ecki and A. Stos: Spectral properties of the Cauchy process on half-line
and interval, preprint 2009

\bibitem{L}
N. Laskin, Fractional quantum mechanics, \emph{Phys. Rev.} \textbf{E62} (2000), 3135

\bibitem{LHB}
J. L\"{o}rinczi, F. Hiroshima and V. Betz: \emph{Feynman-Kac-Type Theorems and Gibbs Measures on Path Space. With
Applications to Rigorous Quantum Field Theory}, de Gruyter Studies in Mathematics \textbf{34}, Walter de Gruyter,
Berlin-New York, to appear, 2010

\bibitem{PS}
G. Pittaluga and L. Sacripante: Inequalities for the zeros of the Airy functions, \emph{SIAM J. Math. Anal.}
\textbf{22} (1991), 260-267

\bibitem{RY}
D. Revuz and M. Yor: \emph{Continuous Martingales and Brownian Motion}, 3rd ed., Springer (New York), 1999

\bibitem{S2}
B. Simon: \emph{Functional Integration and Quantum Physics}, 2nd ed., AMS Chelsea Publishing, 2004

\bibitem{VS}
O. Vall\'ee and M. Soares, \emph{Airy Functions and Applications to Physics}, Imperial College Press, 2004
\end{thebibliography}
\end{document}